\documentclass[12pt]{article}

\usepackage{graphicx}%
\usepackage{multirow}%
\usepackage{amsmath,amssymb,amsfonts}%
\usepackage{amsthm}%
\usepackage{mathrsfs}%
\usepackage[title]{appendix}%
\usepackage{xcolor}%
\usepackage{textcomp}%
\usepackage{manyfoot}%
\usepackage{booktabs}%
\usepackage{algorithm}%
\usepackage{algorithmicx}%
\usepackage{algpseudocode}%
\usepackage{listings}%
\usepackage[top=2cm, bottom=2cm, right=2cm, left=2cm]{geometry}

\newtheorem{definition}{Definition}
\newtheorem{proposition}{Proposition}
\newtheorem{sled}{Corollary}

\newtheorem{lemma}{Lemma}
\newtheorem*{lemma*}{Lemma}
\newtheorem{theor}{Theorem}
\newtheorem*{theor*}{Theorem}
\newtheorem{example}{Example}
\newtheorem{remark}{Remark}
\newcommand{\sign}{\operatorname{sign}}

\newcommand{\SL}{\operatorname{SL}}

\newcommand{\wt}{\operatorname{wt}}
\newcommand{\ch}{\operatorname{ch}}
\theoremstyle{definition}
\date{}
\begin{document}
\title {Branching rule for $\SL_{n+m}(\mathbb{C})\supset\SL_n(\mathbb{C})\times\SL_m(\mathbb{C})$}
\author{Andrei Gornitskii}

\maketitle

\abstract{
We study the branching problem for the pair $\SL_{n+m}(\mathbb{C})\supset\SL_n(\mathbb{C})\times\SL_m(\mathbb{C})$. We describe the corresponding branching rule in terms of a semigroup $\Sigma_{n,m}\subset\Lambda^{+}\times\mathbb{Z}^N$, where $\Lambda^{+}$ is the semigroup of dominant weights of $\SL_{n+m}$, and $N$ is the dimension of maximal unipotent subgroup in $\SL_{n+m}$. Let $V(\lambda)$ be the irreducible representation of $\SL_{n+m}$ with the highest weight $\lambda$. For every dominant weight $\lambda\in\Lambda^{+}$ the set of all $\sigma\in\Sigma_{n,m}$ with dominant weight $\lambda$ parametrizes the irreducible representations in the restriction $V(\lambda)|_{\SL_n\times\SL_m}$ of $V(\lambda)$ to $\SL_{n}\times\SL_{m}$. We describe the semigroup $\Sigma_{n,m}$ as the semigroup of integral points in some polyhedral cone and we find the inequalities defining this cone. }

\section{Introduction} 

The \emph{branching problem} concerns the decomposition of an irreducible representation of a reductive group $G$ upon restriction to a subgroup $H$. A description of \emph{branching rules } (i.e. a solution of the branching problem) is essential for understanding how representations behave under symmetry reduction. Of special interest is the case where $H$ is a maximal subgroup of $G$. It is well known that maximal regular (i.e. stable under the action of the maximal torus consisting of diagonal matrices) subgroups of $G=\SL_{n}(\mathbb{C})$ are $H=\SL_{n-k}(\mathbb{C})\times\SL_{k}(\mathbb{C})$, where the subgroup is block-diagonal. Despite being a classical subject, a solution is unknown in general. 

 If a highest weight $\lambda$ of $G$ is fixed, then there are a number of approaches to solve branching problem for this particular highest weight. For example, one can use Weyl's character formula and find the decomposition of the restriction of the character $\ch V(\lambda)$ on subgroup $H$ as the linear combination of characters of $H$. This approach is unsatisfactory, as it does not provide a connection for different $\lambda$.

It turns out that the irreducible representations of $H$ in the restriction of $V(\lambda)$ (for all dominant $\lambda$) can be parametrized by a certain semigroup, which we call a \emph{branching semigroup}. This allow us to relate the branching rules for different $\lambda$. 

Now we state our main result. We denote by $\varepsilon_i, i=1,\ldots,n+m,$ the weights of the standard representation of $\SL_{n+m}$ in $\mathbb{C}^{n+m}$. Here $\varepsilon_i$ is the weight of the vector $e_i$ with respect to the maximal torus $\SL_{n+m}$ consisting of diagonal matrices. Then the root system is $\Delta=\{\varepsilon_i-\varepsilon_j|i\neq j, 1\leq i,j\leq n+m\}$. Let $V(\lambda)$ be the irreducible representation of $SL_{n+m}$ with the highest weight $\lambda$. We have the decomposition $\lambda=\sum_{i=1}^{n+m-1}k_i\omega_i$, where $\omega_i$ are the fundamental weights of $\SL_{n+m}$ (here $V(\omega_i)=\bigwedge^i (\mathbb{C}^{n+m})$).

We consider the branching problem for the pair $G=\SL_{n+m}$ and $H=\SL_{n}\times\SL_{m}$ and we denote the branching semigroup (see \ref{branching semigroup}) by $\Sigma_{m,n}$. Let $p_{i,j}$ be entries of strictly lower triangular $(n+m)\times (n+m)$ matrix.   We set $v_{i,j}=p_{i,j}+p_{i-1,j}+\ldots+p_{j+1,j}-p_{i-1,j+1}-p_{i-2,j+1}-\ldots-p_{j+2,j+1}$ and $h_{i,j}=p_{i,j}+p_{i,j+1}+\ldots+p_{i,i-1}-p_{i-1,j+1}-\ldots-p_{i-1,i-2}$. The main theorem of the paper is the following
\begin{theor*}
The branching semigroup $\Sigma_{n,m}$ is the semigroup of integral points in the polyhedral cone given by the following inequalities and equalities:
\begin{enumerate}
\item[0.] $p_{i,j}\geq 0$,
\item[1.] $p_{i,j}=0,\quad i<n+1\quad \textrm{or}\quad j>n$,
\item[2.] $ v_{i,j}\leq k_j,\quad 1\leq j\leq n-1$,
\item[3.] $ h_{i,j}\leq k_{i-1},\quad n+2\leq i\leq n+m$,
\item[4.] $ p_{n+1,n}\leq k_n.$
\end{enumerate}
\end{theor*}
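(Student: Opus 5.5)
We will obtain $\Sigma_{n,m}$ from the branching semigroup as defined in \ref{branching semigroup}: the $U_H$-invariants of $\mathbb{C}[G/U]$, where $U$ is the maximal unipotent subgroup of $G=\SL_{n+m}$ and $U_H=U\cap H$. The plan is to reduce the computation to a \emph{lowest-weight (Gelfand--Tsetlin type) degeneration} of $\mathbb{C}[G/U]$. First, identify $\mathbb{C}[G/U]=\bigoplus_\lambda V(\lambda)^*$ with the algebra generated by the flag minors. Then choose a valuation, or equivalently a monomial order, on $\mathbb{C}[U^-]$. Under the big cell $U^-B\subset G$ this algebra embeds into $\mathbb{C}[U^-]\otimes\mathbb{C}[T]$, where the coordinates on $U^-$ are the strictly lower triangular entries. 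The variables $p_{i,j}$ will be the exponents of the leading monomials, so the semigroup of leading terms of $\mathbb{C}[G/U]$ is the semigroup of string/Gelfand--Tsetlin patterns in these $N=\binom{n+m}{2}$ coordinates. We take as known (from the earlier part of the paper) the standard description of this semigroup for the full flag variety. In that description, conditions 2 and 3 are exactly its ``column'' and ``row'' interlacing inequalities $v_{i,j}\le k_j$ and $h_{i,j}\le k_{i-1}$. These are the two natural orderings of reduced words for $w_0$, and the alternating sums in $v_{i,j},h_{i,j}$ come from rewriting GT-interlacing in difference coordinates.

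Second, we pass to $U_H$-invariants. $U_H=U_n\times U_m$ acts on $U^-$ from the left, and $U^-$ factors as $U^-_n\times U^-_m\times R$. Here $R$ is the block of entries $p_{i,j}$ with $i>n\ge j$, which is the unipotent radical of the opposite parabolic. Taking $U_H$-invariants of $\mathbb{C}[G/U]$ amounts to killing the $U^-_n$ and $U^-_m$ directions, which gives condition 1. More precisely, we show that $\mathbb{C}[G/U]^{U_H}\cong\mathbb{C}[G/U]^{U_H}$ restricted to $R\,T$ is injective. The argument is that $U_H\cdot (R\,T)\cdot U$ is dense, the open-cell argument of the type $U_H\times R\times B$. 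So the leading-term semigroup of the invariants is contained in the face $\{p_{i,j}=0$ for $i\le n$ or $j>n\}$ of the full cone. Intersecting the full-flag inequalities with this face produces conditions 0--3. Condition 4, $p_{n+1,n}\le k_n$, is the surviving inequality of the corner entry, which couples the two blocks.

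Third, we prove equality by a dimension/character count, i.e.\ that every integral point of the cone actually occurs. For each integral point we construct an explicit $U_H$-invariant, namely a product of minors of $G$ whose rows are taken among the first $n$ and last $m$ indices appropriately. We check that its leading monomial is $\prod p^{p_{i,j}}$ times the weight. Alternatively, we verify that the number of integral points with fixed $\lambda$ and fixed $H$-weight equals the multiplicity. We do this by comparing with the known branching $\SL_{n+m}\downarrow \mathrm{S}(\mathrm{GL}_n\times\mathrm{GL}_m)$ via Littlewood--Richardson coefficients $c^\lambda_{\mu\nu}$, and reducing the polytope count to a hive/GT count. We would try the explicit-generator route first, since the semigroup is saturated: it is the integral points of a cone, so it suffices to produce generators at the Hilbert basis elements and use multiplicativity of leading terms.

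The main obstacle is the injectivity/surjectivity in the second step. We must show that leading terms of $U_H$-invariants are \emph{exactly} the face points, and not merely contained in them. The worry is that invariance might impose further linear relations among the $R$-block exponents beyond the truncated inequalities, e.g.\ a collapse of the $m\cdot n$ block. This is where the sufficiency construction, or the LR-count comparison, must close the gap.
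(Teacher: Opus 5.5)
\textbf{There is a genuine gap: the necessity of conditions~2 and~3 rests on a false identification, and the sufficiency step is only a list of options.}

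\emph{The false step.} With the root order and degree reverse lexicographic order fixed in Section~\ref{dyck paths}, the essential semigroup $\Sigma$ is the Dyck-path (FFL) semigroup, not a Gelfand--Tsetlin/string semigroup. The inequalities $v_{i,j}\le k_j$ and $h_{i,j}\le k_{i-1}$ are \emph{not} valid on it. So intersecting $\Sigma^{\mathbb{Q}}$ with the face where the exponents of the roots of $\SL_n\times\SL_m$ vanish gives a set strictly larger than $\Sigma_{n,m}$.

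Take $\SL_3\supset\SL_2\times\SL_1$ and $\lambda=\omega_1+\omega_2$. On the face $p_{2,1}=0$ the Dyck inequalities reduce to $p_{3,2}\le 1$ and $p_{3,1}+p_{3,2}\le 2$. These have five integral solutions, including $(p_{3,1},p_{3,2})=(2,0)$. But $V(\omega_1+\omega_2)|_{\SL_2}$ has only four irreducible summands (of dimensions $3,2,2,1$), and condition~2 reads $p_{3,1}\le k_1=1$. So the worry in your last paragraph is realized: conditions~2 and~3 are exactly the extra constraints imposed by invariance, and your plan gives no argument that they are necessary.

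A smaller point: the paper's $\Sigma_{\mathfrak h}$ is built from $U_{\mathfrak h}^-$-invariants, i.e.\ lowest vectors. Left translation by $U_H^-$ preserves $U^-=U_H^-R$, but translation by $U_H$ does not. For $U_H$-invariants, restriction to $RT$ is not even injective. For $\SL_3\supset\SL_2$ and $\lambda=\omega_1$, the $U_H$-invariant $g\mapsto e_2^*(ge_1)$ vanishes identically on $RT$.

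\emph{The sufficiency step.} Your third step names two routes (products of minors at Hilbert basis elements, or an LR/hive count). It identifies neither the invariants, nor the Hilbert basis, nor a bijection.

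\emph{How the paper supplies both ingredients.}
\begin{itemize}
\item \emph{Necessity.} A lowest vector for $\SL_l\times\SL_s$ is lowest for each factor separately, so $\Sigma_{l,s}\subset\Sigma_{l,1}\cap\Sigma_{1,s}$. These one-factor semigroups are known explicitly (Lemma~\ref{lemma 1} and its transpose, Corollary~\ref{corollary 1}). This is exactly where the $v$- and $h$-inequalities come from. The Dyck conditions collapse to $p_{n+1,n}\le k_n$ only because 2 and 3 are already imposed.
\item \emph{Sufficiency.} The paper inducts on $l$. Restriction in stages $\SL_l\times\SL_s\downarrow\SL_{l-1}\times\SL_s$ gives $|\Sigma^t_{l,s}(\lambda)|=|\Sigma_{l-1,s}(\lambda)|$. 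The explicit combinatorial injection $\phi_{l,s}\colon\tilde{\Sigma}^t_{l,s}\to\tilde{\Sigma}_{l-1,s}$ (Lemma~\ref{main lemma}) gives the matching bound $|\tilde{\Sigma}^t_{l,s}(\lambda)|\le|\tilde{\Sigma}_{l-1,s}(\lambda)|$. Together with the inclusion above, these force $\Sigma_{l,s}=\tilde{\Sigma}_{l,s}$.
\end{itemize}
Your proposal has neither the one-factor upper bound nor a counting mechanism of this kind.
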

If $\sigma=(\lambda;\textbf{p})$ is an integral solution of the above inequalities and equalities, where $\textbf{p}$ is the set of $p_{i,j}$, then \emph{the weight} of $\sigma$ is $\wt(\sigma)=\lambda+\sum_{(i,j)}p_{i,j}(\varepsilon_i-\varepsilon_j)$ and \emph{the highest weight} of $\sigma$ is $\lambda$. From the definition of a branching semigroup and its basic properties, the branching rule follows: 
$$
V(\lambda)|_{\SL_{n}\times\SL_m}=\bigoplus_{\sigma\in\Sigma(\lambda)} V(\wt(\sigma)|_{\SL_n\times\SL_m}),
$$
where $\wt(\sigma)|_{\SL_n\times\SL_m}$ denotes the restriction of the weight $\wt(\sigma)$ to ${\SL_{n}\times\SL_m}$ and $\Sigma(\lambda)$ denotes the set of all (integral) solutions with the highest weight $\lambda$.

 The approach we use is general and was described in our paper \cite{[G3]}. It allows one to construct, from an essential semigroup $\Sigma$ parametrizing bases in irreducible representations of a simply connected semisimple group $G$, the branching semigroup that describes the branching rule from $G$ to $H$. For the reader's convenience, we have included sections with precise definitions (see \ref{essential semigroup}, \ref{U-invariant functions}, \ref{branching problem}, and \ref{branching semigroup}). For our purposes, we use the essential semigroup for $\SL_{n+m}$, described in the work \cite{[FFL1]} (see \ref{dyck paths}). We then describe the branching semigroup, proving that it is precisely the semigroup of integral points in the polyhedral cone from Theorem above. The proof of the main theorem relies on a technical lemma, and we have dedicated a separate section to its proof (see \ref{proof of lemma}).

 \section{Bases in the irreducible representations of $\mathfrak{g}$}
 
 All definitions and results of \ref{essential semigroup} and \ref{U-invariant functions} are due to Vinberg.
 \subsection{Essential semigroups}\label{essential semigroup}
 We recall the notion of essential semigroup that parametrizes certain bases in irreducible finite-dimensional representations of a simple complex Lie algebra.

 Let $\mathfrak{g}$ be a simple Lie algebra with the triangular decomposition
$\mathfrak{g}=\mathfrak{u}^{-}\oplus\mathfrak{t}\oplus\mathfrak{u}$, where
$\mathfrak{u}^{-}$ and $\mathfrak{u}$ are mutually opposite maximal unipotent subalgebras, and $\mathfrak{t}=\mathfrak{t}_{\mathfrak{g}}$ is a Cartan subalgebra.

One has: $\mathfrak{u}= \langle e_{\alpha}$ $\mid$ $\alpha$ $\in$ $\Delta_{+}\rangle$,
$\mathfrak{u^{-}}=\langle e_{-\alpha}$ $\mid$ $\alpha$ $\in$ $\Delta_{+}\rangle$, where
$\Delta_{+}=\Delta_+{(\mathfrak{g})}$ is the system of positive roots, $e_{\pm\alpha}$
 are the root vectors, and the symbol $\langle\ldots\rangle$ stands for the linear span.

 We denote the finite-dimensional irreducible $\mathfrak{g}$-module with highest weight $\lambda$ by $V(\lambda)$ or $V_{\mathfrak{g}}(\lambda)$ and a highest weight vector in this module by $v_{\lambda}$. We fix an ordering of the positive roots : $\Delta_{+}=\{\alpha_{1},\dots,\alpha_{N}\}$.

\begin{definition}
 A signature is an $(N+1)$-tuple
 $\sigma=(\lambda;p_{{1}},\dots,p_{{N}})$,
  where $\lambda$ is a dominant weight, and $p_{i}\in\mathbb{Z}_{+}$.
\end{definition}

Set
$$
{v}(\sigma)=e_{-\alpha_{1}}^{p_{{1}}}\cdot \ldots \cdot
 e_{-\alpha_{N}}^{p_{{N}}}\cdot{v}_{\lambda}\in V(\lambda).
$$
$\lambda$ is called the \emph{highest weight} of $\sigma$, the eigenweight $\wt(\sigma)=\lambda-\sum p_i\alpha_i$ of the vector $v(\sigma)$ is called the \emph{weight} of $\sigma$, and the numbers $(p_1,\ldots,p_N)$ are called \emph{exponents} of $\sigma$.

 Fix any monomial order $<$ on $\mathbb{Z}^N$. We use this order to compare signatures with the same highest weight $\lambda$, i.e. if $\sigma=(\lambda;p_1,\ldots,p_N)$ and $\tau=(\lambda;q_1,\ldots,q_N)$, then $$\sigma<\tau \iff (p_1,\ldots,p_N)<(q_1,\ldots,q_N).$$

 \begin{definition} A signature $\sigma$ is essential if
 $v(\sigma)\notin\langle v(\tau)\mid\tau<\sigma\rangle$.

\end{definition}

For a dominant weight $\lambda$ the set $\{{v}(\sigma)\mid\sigma \mbox{ is essential of highest weight $\lambda$}\}$ is a basis of $V(\lambda)$. Moreover, the set of essential signatures (for all $\lambda$) is a subsemigroup of $\Lambda^+\times\mathbb{Z}_{+}^N$, where $\Lambda^+$ is the semigroup of dominant weights. The proof will be given below. We denote the semigroup of essential signatures, or \emph{essential semigroup}, by $\Sigma$. Let $\Sigma(\lambda)$ be the set of all essential signatures of the highest weight $\lambda$.

\subsection{U-invariant functions}\label{U-invariant functions}

Let $G$ be a simply connected simple complex algebraic group such that $\mathop{\mathrm{Lie}}G=\mathfrak{g}$.  Let $T$ be the maximal torus in $G$ such that $\mathop{\mathrm{Lie}}T=\mathfrak{t}$ and $U$ be the maximal unipotent subgroup of $G$ such that
 $\mathop{\mathrm{Lie}}U=\mathfrak{u}$.
 
Now we show that the essential signatures can be interpreted as least terms of functions on the homogeneous space $G/U$. As a consequence we prove that essential semigroup $\Sigma$ is indeed a semigroup.

 Consider the homogeneous space $G/U$.
 Let $B=T\rightthreetimes U$ be the Borel subgroup. Then

$$
 \mathbb{C}[G/U]=\bigoplus_{\lambda} \mathbb{C}[G]_{\lambda}^{(B)},
$$
where $$\mathbb{C}[G]_{\lambda}^{(B)}=\{f\in\mathbb{C}[G]\mid f(gtu)=\lambda(t)f(g),\, \forall g\in G, t\in T, u\in U\}$$
is the subspace of eigenfunctions of weight $\lambda$ for $B$ acting on $\mathbb{C}[G]$ by right translations of an argument.
 Each subspace $\mathbb{C}[G]_{\lambda}^{(B)}$ is finite-dimensional and is isomorphic as a $G$-module (with respect to the action of $G$ by left translations of an argument), to the space $V(\lambda)^{*}$ of linear functions on $V(\lambda)$ (see \cite{[Ï]}, Theorem 3). The isomorphism is given by the formula:
$$
 V(\lambda)^{*}\ni\omega \longmapsto f_{\omega}\in\mathbb{C}[G]_{\lambda}^{(B)},
 \quad\textrm{where}\quad f_{\omega}(g)=\langle\omega, g\emph{v}_{\lambda}\rangle.
$$

Let $U^{-}$ be the maximal unipotent subgroup such that $\mathop{\mathrm{Lie}}U^{-}=\mathfrak{u^{-}}$. The function $f_{\omega}$ is uniquely determined by its restriction to the dense open subset $U^{-}\cdot$$T\cdot$$U$; moreover
\begin{multline*}
 $$f_{\omega}(u^{-}\cdot t\cdot u)=\langle\omega
,u^{-} tu\emph{v}_{\lambda}\rangle=\langle\omega
,\lambda(t)u^{-}\emph{v}_{\lambda}\rangle=\lambda(t)f_{\omega}(u^{-}),\\ \quad \forall u\in U, u^{-}\in U^{-}, t\in T.
$$
\end{multline*}
Next, $U^{-}=U_{-\alpha_{1}}\cdot\ldots\cdot U_{-\alpha_{N}}$, where
$U_{\alpha}=\{\exp(ze_{\alpha})\mid$ $z\in\mathbb{C}\}$ (see \cite[Sec. X, \S 28.1]{[X]}). Hence

$$
 u^{-}=\exp(z_{1}e_{-\alpha_{1}})\cdot\ldots\cdot\exp(z_{N}e_{-\alpha_{N}}).
$$
Thus we obtain
$$
f_{\omega}(u^{-})=\left\langle\omega,
\exp(z_{1}e_{-\alpha_{1}})\cdot\ldots\cdot\exp(z_{N}e_{-\alpha_{N}})
\cdot \emph{v}_{\lambda}\right\rangle=\sum_{\sigma=(\lambda;p_{1},\dots,p_{N})}
\frac{\prod z_{i}^{p_{i}}}{\prod p_{i}!}
\langle\omega,\emph{v}(\sigma)\rangle.
$$

\begin{proposition}
 A signature $\sigma$ is essential if and only if
$\prod z_{i}^{p_{i}}$ is the least term in
 $f_{\omega}|_{U^{-}}$ for some
$\omega \in V(\lambda)^{*}$ in the sense of the order introduced above.
\end{proposition}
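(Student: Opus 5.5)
The argument is linear algebra in the finite-dimensional space $V(\lambda)$, combined with the formula for $f_{\omega}|_{U^{-}}$ displayed above. Fix $\lambda$ and write $S(\lambda)$ for the finite set of signatures $\sigma$ of highest weight $\lambda$ with $v(\sigma)\neq 0$. Only finitely many exponents give nonzero vectors, since $\wt(\sigma)$ must be a weight of $V(\lambda)$. For $\omega\in V(\lambda)^*$, the coefficient of the monomial $z^{\mathbf p}=\prod z_i^{p_i}$ in $f_\omega|_{U^-}$ is $\langle\omega,v(\sigma)\rangle/\prod p_i!$, where $\sigma=(\lambda;\mathbf p)$. So $z^{\mathbf p}$ is the least term of $f_\omega|_{U^-}$ exactly when
\[
\langle\omega,v(\sigma)\rangle\neq0 \quad\text{and}\quad \langle\omega,v(\tau)\rangle=0 \text{ for all } \tau<\sigma.
\]
Here we use that $<$ is a total order on $\mathbb{Z}^N$, so a nonzero polynomial has a well-defined least term. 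Both directions of the proposition reduce to this reformulation.

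\emph{Essential implies least term.} Suppose $\sigma$ is essential, and set $W=\langle v(\tau)\mid\tau<\sigma\rangle\subset V(\lambda)$. By definition $v(\sigma)\notin W$. Choose a linear functional $\omega$ vanishing on $W$ with $\langle\omega,v(\sigma)\rangle\neq0$; this is possible since we are in finite dimensions. Then every coefficient of a monomial smaller than $z^{\mathbf p}$ vanishes, and the coefficient of $z^{\mathbf p}$ is nonzero. In particular $f_\omega|_{U^-}\neq0$, and its least term is $z^{\mathbf p}$.

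\emph{Least term implies essential.} Suppose $z^{\mathbf p}$ is the least term of $f_\omega|_{U^-}$ for some $\omega$. Then $\omega$ vanishes on every $v(\tau)$ with $\tau<\sigma$, hence on their span $W$. On the other hand $\langle\omega,v(\sigma)\rangle\neq0$. Therefore $v(\sigma)\notin W$, which is precisely the definition of $\sigma$ being essential.

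The only point needing care is the expansion formula itself. One checks it by expanding each $\exp(z_ie_{-\alpha_i})=\sum_{p}z_i^pe_{-\alpha_i}^p/p!$; each sum is finite on $V(\lambda)$ by nilpotency. Multiplying out gives the stated sum with coefficients $\langle\omega,v(\sigma)\rangle$. Distinct exponent tuples give distinct monomials, so the coefficient of each monomial is read off unambiguously. I expect no real obstacle beyond this bookkeeping: the statement is essentially the duality between spans and annihilators.
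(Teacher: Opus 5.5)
Your proof is correct and follows the paper's argument. The direction from least term to essential is identical. For the converse, you take any $\omega$ annihilating $W=\langle v(\tau)\mid\tau<\sigma\rangle$ with $\langle\omega,v(\sigma)\rangle\neq0$, whereas the paper takes $\omega$ dual to $v(\sigma)$ with respect to the basis of essential vectors; your choice is slightly cleaner, since it uses only the definition of essentiality and not the fact, asserted but not proved in the paper, that the essential vectors form a basis of $V(\lambda)$.
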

\begin{proof}
Let $\prod z_{i}^{p_{i}}$ be the least term in $f_{\omega}|_{U^{-}}$ for some $\omega\in V(\lambda)^{*}$. Then $\omega$ vanishes on all vectors $\emph{v}(\tau)$ with $\tau<\sigma$ and is nonzero at  $\emph{v}(\sigma)$. Consequently, $\emph{v}(\sigma)$ cannot be expressed via $\emph{v}(\tau)$ with $\tau<\sigma$, and hence  $\sigma$ is essential.

Conversely, let $\sigma$ be essential. Consider a function $\omega$ that vanishes on $\emph{v}(\tau)$ for all essential $\tau$ except for $\sigma$. Obviously, $f_{\omega}|_{U^{-}}$ has the desired least term.
\end{proof}
\begin{proposition}
If $\sigma,\tau\in\Sigma$ then $\sigma+\tau\in\Sigma$.
\end{proposition}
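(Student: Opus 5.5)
The plan is to use the characterization of essential signatures by least terms, together with the multiplicativity of the functions $f_\omega$ on $G/U$.

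Let $\sigma=(\lambda;p_1,\dots,p_N)$ and $\tau=(\mu;q_1,\dots,q_N)$ be essential. By the preceding Proposition there are $\omega\in V(\lambda)^*$ and $\eta\in V(\mu)^*$ with the following properties:
\begin{itemize}
\item the least term of $f_\omega|_{U^-}$, as a polynomial in $z_1,\dots,z_N$, is $\prod z_i^{p_i}$, with a nonzero coefficient;
\item the least term of $f_\eta|_{U^-}$ is $\prod z_i^{q_i}$, with a nonzero coefficient.
\end{itemize}

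Next, consider the product $F=f_\omega f_\eta\in\mathbb{C}[G]$. For $g\in G$, $t\in T$, $u\in U$ we have $F(gtu)=\lambda(t)\mu(t)F(g)=(\lambda+\mu)(t)F(g)$. Hence $F\in\mathbb{C}[G]^{(B)}_{\lambda+\mu}$. By the isomorphism $V(\lambda+\mu)^*\cong\mathbb{C}[G]^{(B)}_{\lambda+\mu}$ recalled above, $F=f_\xi$ for some $\xi\in V(\lambda+\mu)^*$. Concretely, $\xi$ is the restriction of $\omega\otimes\eta$ to the Cartan component $V(\lambda+\mu)\subset V(\lambda)\otimes V(\mu)$ generated by $v_\lambda\otimes v_\mu$. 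Citing the isomorphism is enough, though.

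It remains to identify the least term of $f_\xi|_{U^-}=f_\omega|_{U^-}\cdot f_\eta|_{U^-}$. Since $<$ is a monomial order, it is compatible with multiplication. So the least monomial of a product of two nonzero polynomials is the product of their least monomials, and its coefficient is the product of the two nonzero leading coefficients. This is the standard fact that $\mathrm{in}(fg)=\mathrm{in}(f)\,\mathrm{in}(g)$, which holds because $\mathbb{C}[z]$ is a domain and the order is a well-order compatible with addition of exponents.

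Therefore the least term of $f_\xi|_{U^-}$ is $\prod z_i^{p_i+q_i}$. Applying the preceding Proposition again, with highest weight $\lambda+\mu$, shows that $\sigma+\tau=(\lambda+\mu;p_1+q_1,\dots,p_N+q_N)$ is essential.

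The only delicate point is the identification of the product $f_\omega f_\eta$ with some $f_\xi$ for $\xi\in V(\lambda+\mu)^*$, i.e., checking that it lies in the correct $B$-eigenspace. This follows directly from the displayed definition of $\mathbb{C}[G]^{(B)}_\lambda$.

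One should also note that the restriction to $U^-$ uses the same coordinates $z_i$ for all highest weights. This holds because the parametrization $u^-=\prod\exp(z_ie_{-\alpha_i})$ does not depend on $\lambda$.
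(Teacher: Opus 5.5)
Your proof is correct and is essentially the paper's argument: realize $\sigma$ and $\tau$ as least terms of $f_\omega|_{U^-}$ and $f_\eta|_{U^-}$, note that the least term of the product is the product of the least terms, and apply the preceding Proposition. You also make explicit two points the paper leaves implicit: that $f_\omega f_\eta$ lies in $\mathbb{C}[G]^{(B)}_{\lambda+\mu}$, so it equals $f_\xi$ for some $\xi\in V(\lambda+\mu)^*$, and why least terms are multiplicative for a monomial order.
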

\begin{proof}
Suppose that the least terms in $f|_{U^{-}}$ and $g|_{U^{-}}$ correspond to the essential signatures $\sigma$ and $\tau$. Then the least term in $(f\cdot g)|_{U^{-}}$ corresponds to the signature $\sigma+\tau$. Hence $\sigma+\tau$ is essential.
\end{proof}
\subsection{Dyck paths and essential semigroup for $\mathfrak{sl}_{n+1}$}\label{dyck paths} 
We denote by $\varepsilon_i, i=1,\ldots,n+1,$ the weights of the standard representation of $\mathfrak{sl}_{n+1}$ in $\mathbb{C}^{n+1}$. Here $\varepsilon_i$ is the weight of the vector $e_i$ with respect to Cartan subalgebra of $\mathfrak{sl}_{n+1}$ consisting of diagonal matrices. Let $\beta_i=\varepsilon_i-\varepsilon_{i+1}, i=1,\ldots,n$, be the simple roots and let $\omega_i=\varepsilon_1+\ldots+\varepsilon_i$ be the fundamental weights. Choose the following ordering of positive roots of $\mathfrak{sl}_{n+1}$: $\varepsilon_i-\varepsilon_j>\varepsilon_k-\varepsilon_l$ if either  $i<k$ or $i=k$ and $j>l$. We will use degree reverse lexicographic order on $\mathbb{Z}^{\frac{(n+1)n}{2}}$. Let $\Sigma$ be the corresponding essential semigroup and $\Sigma(\lambda)$ be the set of all signatures in $\Sigma$ with the highest weight $\lambda$.

Following \cite{[FFL1]} we describe $\Sigma$ in terms of \emph{Dyck paths}. A Dyck path is a sequence of negative roots
$$
\textbf{p}=(\beta(0),\beta(1),\ldots,\beta(k)), k\geq0,
$$
where the elements satisfy the following condition:
$$
\textrm{if}\quad \beta(s)=-(\varepsilon_p-\varepsilon_q)\quad\textrm{then}
$$
$$
 \beta(s+1)=-(\varepsilon_{p+1}-\varepsilon_q)\quad\textrm{or}\quad
\beta(s+1)=-(\varepsilon_p-\varepsilon_{q+1}).
$$
Every negative root corresponds to an entry of $(n+1)\times (n+1)$ matrix below the main diagonal. So a Dyck path is a sequence of below-diagonal entries such that each successive element moves strictly rightward or downward relative to the previous element. 

\begin{center}
\begin{picture}(80,80)
\put(0,0){\line(1,0){80}} \put(0,80){\line(1,0){80}}
\put(0,0){\line(0,1){80}} \put(80,0){\line(0,1){80}}
\multiput(3,77)(3,-3){26}{\circle*{1}}
\put(6,63){\circle*{3}}
\put(6,61){\vector(0,-1){15}}

\put(6,44){\circle*{3}}
\put(6,42){\vector(0,-1){15}}

\put(6,25){\circle*{3}}
\put(8,25){\vector(1,0){15}}

\put(25,25){\circle*{3}}
\put(25,23){\vector(0,-1){15}}

\put(25,6){\circle*{3}}
\put(27,6){\vector(1,0){15}}

\put(44,6){\circle*{3}}
\put(46,6){\vector(1,0){15}}

\put(63,6){\circle*{3}}

\end{picture}
\end{center}

A signature $\sigma\in\Sigma$ can be seen as a $(n+1)\times (n+1)$ matrix with zero entries at the main diagonal and above. More precisely the $(j,i)$-th entry ($i<j$) equals to the exponent $p_{j,i}$ of $\sigma$ corresponding to the root $\varepsilon_i-\varepsilon_j$. Denote by $\Sigma^{\mathbb{Q}}$ the rational cone generated by $\Sigma$. The following theorem is proved in \cite{[FFL1]}:
\begin{theor}
Let $\lambda=\sum_{i=1}^{n}k_i\omega_i$ be a dominant weight of $\SL_{n+1}$. The cone $\Sigma^{\mathbb{Q}}$ is given by the following inequalities:
\begin{enumerate}
\item[0.]$p_{i,j}\geq 0,$
\item[1.]$\sum_{(i,j)\in D}p_{i,j}\leq k_s+\ldots+k_{l-1}$

 for every Dyck path $D$ that starts at the $(s,s)$-th entry and ends at the $(l,l)$-th entry.
\end{enumerate}
The essential semigroup $\Sigma$ is the semigroup of integral points in $\Sigma^{\mathbb{Q}}$ and it is generated by $\Sigma(\omega_i)$.
\end{theor}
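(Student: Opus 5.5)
The plan is to prove the two inclusions between $\Sigma(\lambda)$ and the set $S(\lambda)$ of integral points $(\lambda;\textbf{p})$ satisfying inequalities 0 and 1. One inclusion comes from explicit relations in $V(\lambda)$; the other comes from the semigroup property of $\Sigma$ (the Proposition above) together with a Minkowski-type decomposition of $S(\lambda)$. The generation statement and the description of $\Sigma^{\mathbb{Q}}$ then follow automatically.

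\emph{Step 1: $\Sigma(\lambda)\subset S(\lambda)$.} Fix a Dyck path $D$ from $(s,s)$ to $(l,l)$ and put $M=k_s+\ldots+k_{l-1}$. The root $\varepsilon_s-\varepsilon_l$ satisfies $\langle\lambda,(\varepsilon_s-\varepsilon_l)^\vee\rangle=M$. Hence $e_{-(\varepsilon_s-\varepsilon_l)}^{M+1}v_\lambda=0$, and the same holds with $v_\lambda$ replaced by $U(\mathfrak u^-)v_\lambda$, i.e.\ the left ideal generated by $e_{-(\varepsilon_s-\varepsilon_l)}^{M+1}$ annihilates $V(\lambda)$ modulo $\mathfrak u$-terms. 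Applying suitable products of operators $\operatorname{ad} e_{\varepsilon_p-\varepsilon_{p'}}$ with $s\le p<p'\le l$ (raising operators acting on $\mathfrak u^-$ by adjoint action) to this relation, as in \cite{[FFL1]}, produces relations in $U(\mathfrak u^-)$, valid on $v_\lambda$, whose terms are monomials in the $e_{-\alpha}$ with $\alpha$ on $D$.

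The aim is one such relation for each multi-exponent $\textbf{q}$ supported on $D$ with $\sum_{D} q_{i,j}=M+1$, in which the monomial $\prod_{(i,j)\in D}e_{-\alpha_{i,j}}^{q_{i,j}}$ is the \emph{maximal} term for the degree reverse lexicographic order. All other terms have the same total degree, so they are smaller. Multiplying by an arbitrary monomial keeps the leading term leading, since the order is monomial. Consequently any $\sigma$ violating inequality 1 for $D$ satisfies $v(\sigma)\in\langle v(\tau)\mid\tau<\sigma\rangle$, i.e.\ $\sigma$ is not essential.

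\emph{Step 2: fundamental weights.} For $\lambda=\omega_i$ the inequalities force each Dyck path through the relevant region to carry total weight at most $1$. The resulting points of $S(\omega_i)$ are in bijection with $i$-subsets of $\{1,\ldots,n+1\}$, so $|S(\omega_i)|=\binom{n+1}{i}=\dim V(\omega_i)$. On the other hand, $|\Sigma(\omega_i)|=\dim V(\omega_i)$ because essential vectors form a basis. Combined with Step 1 this gives $\Sigma(\omega_i)=S(\omega_i)$.

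\emph{Step 3: Minkowski property and conclusion.} The key claim is
\[
S(\lambda+\mu)=S(\lambda)+S(\mu)
\]
for dominant $\lambda,\mu$. The inclusion ``$\supseteq$'' is immediate because the inequalities are additive in $(\lambda;\textbf{p})$. Granting the claim, we get $S(\lambda)=\sum_i k_i S(\omega_i)=\sum_i k_i\Sigma(\omega_i)\subset\Sigma(\lambda)$, using the semigroup property. Together with Step 1 this yields $\Sigma(\lambda)=S(\lambda)$, and also that $\Sigma$ is generated by the $\Sigma(\omega_i)$. Finally, $\Sigma^{\mathbb{Q}}$ is the cone cut out by 0 and 1: the integral points of this cone are exactly $\bigcup_\lambda S(\lambda)$, and the cone is rational.

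The hard part is the inclusion ``$\subseteq$'' in the Minkowski claim. Concretely, given $\textbf{p}\in S(\lambda)$ with $k_i>0$, one must split off an element of $S(\omega_i)$, i.e.\ a $0/1$ pattern, leaving a point of $S(\lambda-\omega_i)$. I would take $i$ maximal (or minimal) with $k_i>0$ and choose the subtracted pattern greedily. For each column, remove $1$ from the lowest nonzero entry lying on some \emph{tight} Dyck path, meaning a path on which inequality 1 holds with equality. The inequalities for $\lambda-\omega_i$ would then be checked by an exchange argument on crossing tight Dyck paths, uncrossing two tight paths into two tight paths. This is the induction carried out in \cite{[FFL1]}, and I expect it, together with the precise bookkeeping of leading terms in Step 1, to be the main technical obstacle.
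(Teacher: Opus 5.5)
The paper gives no proof of this theorem; it quotes it from \cite{[FFL1]}, so the relevant comparison is with the argument there. Your outline rests on the same two substantive inputs as \cite{[FFL1]}:
\begin{enumerate}
\item straightening relations derived from $e_{-\gamma}^{M+1}v_\lambda=0$, with $\gamma=\varepsilon_s-\varepsilon_l$, which show that signatures outside the polytope are not essential;
\item the Minkowski property $S(\lambda+\mu)=S(\lambda)+S(\mu)$.
\end{enumerate}
Like the paper, you import both from \cite{[FFL1]}.

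The genuine difference is the linear-independence step. In \cite{[FFL1]} it is done inside the PBW-graded module. One maps $v_{\lambda+\mu}\mapsto v_\lambda\otimes v_\mu$ and controls the leading terms of the images of monomials in the tensor product, which requires a suitably chosen decomposition of each exponent vector. You replace this with Proposition~2 (the semigroup property of $\Sigma$) and the count $|S(\omega_i)|=\binom{n+1}{i}=|\Sigma(\omega_i)|$. That count is correct: $S(\omega_i)$ consists of the $0/1$ antichains of an $i\times(n+1-i)$ rectangle. With this, \emph{any} decomposition of $\textbf{p}$ into elements of the $S(\omega_i)$ certifies that $(\lambda;\textbf{p})$ is essential, so only the set-theoretic Minkowski identity is needed. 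Your route is shorter and fits the Vinberg framework of the paper exactly. The route of \cite{[FFL1]} stays inside the graded module and needs no input from $\mathbb{C}[G/U]$. Given the two imported results, your argument is correct.

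Two local claims need repair.

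\textbf{Step~1.} $e_{-\gamma}^{M+1}$ does not annihilate $U(\mathfrak{u}^-)v_\lambda$. For example, take $\lambda=\omega_1$ and $\gamma=\varepsilon_2-\varepsilon_3$: then $M=0$, yet $e_{-\gamma}e_{-(\varepsilon_1-\varepsilon_2)}v_\lambda\neq 0$. What you need is that $\operatorname{Ann}_{U(\mathfrak g)}(v_\lambda)$ is a left ideal stable under $\operatorname{ad}e_\beta$ for $\beta\in\Delta_+$, which holds because $e_\beta v_\lambda=0$.

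The relations obtained this way need not be supported on $D$. Also, ``same total degree, so they are smaller'' does not follow. Lower-degree terms, coming from reordering in $U(\mathfrak u^-)$ and from the $\mathfrak t$- and $\mathfrak u$-components of commutators, are harmless because the order compares degrees first. But the claim that the remaining degree-$(M+1)$ terms lie below the $D$-supported monomial is exactly the leading-term computation of \cite{[FFL1]} for the root order of Section~\ref{dyck paths}.

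\textbf{Step~3.} The greedy rule as written fails, because the removed pattern must be an antichain of the rectangle $\{(q,p)\mid p\le i<q\}$. Take $\SL_3$, $\lambda=\omega_1+\omega_2$, $p_{2,1}=0$, $p_{3,1}=p_{3,2}=1$, and $i=2$. Both entries are the lowest nonzero ones in their columns and lie on the tight path $(2,1),(3,1),(3,2)$. Removing both violates the $\omega_2$-bound $1$ on that path; the correct split removes only $p_{3,2}$. So Step~3 really rests on the decomposition proved in \cite{[FFL1]}, not on this heuristic.
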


\section{The branching algebra and the branching semigroup}
\subsection{The branching problem and the branching algebra}\label{branching problem}
Let $\mathfrak{h}\subset\mathfrak{g}$ be a simple Lie subalgebra of $\mathfrak{g}$. Let $H\subset G$ be a connected algebraic group such that $\mathop{\mathrm{Lie}}H=\mathfrak{h}$.

 Restrict the irreducible representation $V_{\mathfrak{g}}(\lambda)$ with the highest weight $\lambda$ to $\mathfrak{h}$:
$$
V_{\mathfrak{g}}(\lambda)|_{\mathfrak{h}}=\bigoplus_{\lambda'}m_{\lambda,\lambda'}V_{\mathfrak{h}}(\lambda'),
$$
where $V_{\mathfrak{h}}(\lambda')$ is the irreducible representation of $\mathfrak{h}$ with the highest weight $\lambda'$, and $m_{\lambda,\lambda'}$ is the multiplicity. The classical branching problem is to determine $m_{\lambda,\lambda'}$.

Consider the action of $H$ on $\mathbb{C}[G/U]=\bigoplus_{\lambda}V(\lambda)^*$ by left translations of an argument. Let $U_{\mathfrak{h}}$ be a maximal unipotent subgroup of $H$. The algebra $\mathbb{C}[G/U]^{U_{\mathfrak{h}}}$ of $U_{\mathfrak{h}}$-invariants is called \emph{the branching algebra}. This is a finitely generated algebra consisting of the highest vectors of $\mathfrak{h}$. A description of this algebra in terms of generators and relations solves the branching problem.

\subsection{The branching semigroup}\label{branching semigroup}

Now we want to introduce a subsemigroup $\Sigma_{\mathfrak{h}}$ of the essential semigroup $\Sigma$, which is related to the branching problem. We call $\Sigma_{\mathfrak{h}}$ \emph{the branching semigroup}.

Recall that $f_\omega\in V(\lambda)^*$ is uniquely determined by its restriction to $U^-\cdot T$. Let $t_1,\ldots,t_n$ be the coordinates on $T$ corresponding to the fundamental weights $\omega_i$, i.e. $t_i=\omega_i(t), t\in T$. Then $f_\omega$ can be thought as a polynomial in $t_1,\ldots,t_n, z_1,\ldots, z_N$. Indeed, if $\lambda=\sum_i k_i\omega_i$ then $$f_\omega(u^-\cdot t)=t_1^{k_1}\cdot\ldots\cdot t_n^{k_n}\cdot \left(\sum_{\sigma=(\lambda;p_{1},\dots,p_{N})}
\frac{\prod z_{i}^{p_{i}}}{\prod p_{i}!}
\langle\omega,\emph{v}(\sigma)\rangle\right).$$
The expression in the brackets has the form $cz_1^{p_1}\cdot\ldots\cdot z_N^{p_N}+\mbox{higher terms}$ (with respect to the chosen monomial order on $\mathbb{Z}^N$), where $c\in\mathbb{C}\backslash \{0\}$. Set $\sign(f_\omega)=(\lambda;p_1,\ldots,p_N)\in\Sigma$. Obviously, $\sign(f_{\omega_1} f_{\omega_2})=\sign(f_{\omega_1})+\sign(f_{\omega_2})$.

Let $\Sigma_{\mathfrak{h}}=\{\sign(f_\omega)\mid f_\omega\in\mathbb{C}[G/U]^{U_{\mathfrak{h}}^{-}}\}$, where $U_{\mathfrak{h}}^{-}\subset H$ is the opposite maximal unipotent subgroup to $U_{\mathfrak{h}}$.  So, $\Sigma_{\mathfrak{h}}$ consists of essential signatures that are the least terms of the lowest vectors with respect to $\mathfrak{h}$. Denote by $\Sigma_{\mathfrak{h}}(\lambda)$ the set of all signatures of the highest weight $\lambda$ in $\Sigma_{\mathfrak{h}}$.

 If $\Sigma_{\mathfrak{h}}$ is finitely generated then a description of $\Sigma_{\mathfrak{h}}$ in terms of generators and relations solves the branching problem. Indeed, the signature $\sigma\in\Sigma_{\mathfrak{h}}(\lambda)$ defines the irreducible representation $V_{\mathfrak{h}}(\lambda')$ in $V_{\mathfrak{g}}(\lambda)$ where $\lambda'$ is the weight of $v(\sigma)$ restricted to $\mathfrak{h}$. Therefore the multiplicity
  $m_{\lambda,\lambda'}$ is equal to the number of signatures $\sigma$ in $\Sigma_{\mathfrak{h}}(\lambda)$ such that the weight of $v(\sigma)$ is $\lambda'$ when restricted to $\mathfrak{h}$.

\subsection{ The branching rule for $\SL_{n+m}\supset\SL_{n}\times\SL_{m}$}
 Choose the ordering of positive roots of $\SL_{n+m}$ and the monomial order as in \ref{dyck paths}. For every signature $\sigma=(\lambda;\bold{p})$ let $p_{i,j}, i>j,$ be the exponent of $\sigma$ corresponding to the root $\varepsilon_j-\varepsilon_i$.
A signature $\sigma$ can be thought as a filling an $(n+m)\times (n+m)$ matrix with numbers $p_{i,j}$ below the main diagonal.

 Denote by $\Sigma_{l,s}$ the branching semigroup for $\SL_{n+m}\supset\SL_{l}\times\SL_{s}$, where $\SL_{l}$ and $\SL_{s}$ are embedded in $\SL_{n+m}$ as the upper left corner and the lower right corner, respectively. We will see that a signature $\sigma\in\Sigma_{n,m}$ may have nonzero exponents only in lower left $m\times n$ corner. We set $v_{i,j}=p_{i,j}+p_{i-1,j}+\ldots+p_{j+1,j}-p_{i-1,j+1}-p_{i-2,j+1}-\ldots-p_{j+2,j+1}$ and $h_{i,j}=p_{i,j}+p_{i,j+1}+\ldots+p_{i,i-1}-p_{i-1,j+1}-\ldots-p_{i-1,i-2}$. Here $v$ and $h$ stand for 'vertical' and 'horizontal', respectively.
\begin{theor}{\label{main theorem}}
Let $\lambda=\sum_{i=1}^{n+m-1}k_i\omega_i$ be a dominant weight of $\SL_{n+m}$. The cone $\Sigma^{\mathbb{Q}}_{n,m}$ is given by the following inequalities and equalities:
\begin{enumerate}
\item[0.] $p_{i,j}\geq 0$,
\item[1.] $p_{i,j}=0,\quad i<n+1\quad \textrm{or}\quad j>n$,
\item[2.] $ v_{i,j}\leq k_j,\quad 1\leq j\leq n-1$,
\item[3.] $ h_{i,j}\leq k_{i-1},\quad n+2\leq i\leq n+m$,
\item[4.] $ p_{n+1,n}\leq k_n.$
\end{enumerate}
The branching semigroup $\Sigma_{n,m}$ is the semigroup of integral points in $\Sigma^{\mathbb{Q}}_{n,m}$.
\end{theor}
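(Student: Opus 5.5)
The argument has two inclusions, with the FFL cone of Theorem 1 (the Dyck path inequalities) as the ambient description of $\Sigma$.

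\emph{Step 1 (support, equality 1).} Let $f_\omega$ be $U^-_{\mathfrak h}$-invariant, where $U^-_{\mathfrak h}=U^-_{\SL_n}\times U^-_{\SL_m}$. Its restriction to $U^-\cdot T$ is then invariant under left multiplication by $U^-_{\mathfrak h}\subset U^-$. I would factor $U^-=U^-_{\mathfrak h}\cdot U^-_{0}\cdot(\ldots)$, where $U^-_0$ is the abelian unipotent radical corresponding to the lower-left $m\times n$ block, and use the chosen ordering of roots. The aim is to show that $f_\omega|_{U^-}$ depends only on the block coordinates, after a triangular change of coordinates that preserves the least term under degree reverse lexicographic order. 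This gives $p_{i,j}=0$ outside the block. Equivalently, $v(\sigma)$ for a lowest vector has least term using only $e_{-(\varepsilon_j-\varepsilon_i)}$ with $j\le n<i$.

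\emph{Step 2 ($\Sigma_{n,m}\subset$ cone).} For $\sigma\in\Sigma_{n,m}$, $\sigma$ is essential, so it satisfies every Dyck path inequality of Theorem 1. Inequality 4 is the one-entry path at $(n+1,n)$. For inequalities 2 and 3, I would use invariance under the simple root vectors $e_{-\beta_j}$ of $\SL_n$ ($j\le n-1$) and of $\SL_m$. Applying $e_{-\beta_j}$ to $v(\sigma)$ and reordering shifts exponents from column $j+1$ into column $j$. The lowest-vector condition, $e_{-\beta_j}\cdot(\text{vector})=0$, forces the $\mathfrak{sl}_2$-string bound: the combination $v_{i,j}$ is bounded by $k_j$, since $k_j$ is the $\beta_j$-weight of $v_\lambda$ corrected by the exponents already present. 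Concretely, I expect $v_{i,j}$ to be exactly the Dyck path sum along column $j$ from row $j+1$ to row $i$, minus the part in column $j+1$ that cancels after the invariance substitution. Inequality 3 is the transposed (horizontal) version using the $\SL_m$ simple roots.

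\emph{Step 3 (cone $\subset\Sigma_{n,m}$).} Here I would construct, for each integral point, an invariant function with that least term. The plan is to find the generators of the cone, namely the Hilbert basis of the integral points. I expect these to be supported on a single "hook" in the block, with $\lambda$ a sum of at most two fundamental weights. For each generator, I would exhibit an explicit $U^-_{\mathfrak h}$-invariant, such as an appropriate minor or product of minors of $g$ in rows and columns adapted to the block decomposition (the lowest vectors in $\bigwedge^i\mathbb C^{n+m}\cong\bigoplus_{a+b=i}\bigwedge^a\mathbb C^n\otimes\bigwedge^b\mathbb C^m$), and compute its least term. Multiplicativity of $\sign$ then covers all integral points, provided the cone's integral points are generated by these, i.e.\ the cone is normal with that Hilbert basis. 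Alternatively, a dimension count can close the argument: the number of integral points of highest weight $\lambda$ and $\SL_n\times\SL_m$-weight $\lambda'$ should equal $m_{\lambda,\lambda'}$, which is a Littlewood–Richardson-type count. Since distinct least terms give linearly independent lowest vectors, the inclusion from Step 2 combined with equal counts yields equality.

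\emph{Main obstacle.} The hard part is the combinatorial lemma behind Step 3: showing that every integral point of the cone decomposes as a sum of generators realized by explicit invariants, or equivalently that the point count matches the multiplicities. I would first attempt an inductive peeling argument. Subtract a generator corresponding to the largest $k_j>0$, chosen along a greedy path determined by the tight inequalities among 2–4, and check that the remainder still satisfies all inequalities with $\lambda-\omega_j$ (or $\lambda-\omega_j-\omega_l$).
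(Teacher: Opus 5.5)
Your Step 1 is sound, and cleaner than the paper's lemma on regular subalgebras. Write $u^-=u_{\mathfrak h}u_0$ with $u_0$ in the unipotent radical of the block-lower-triangular parabolic. Invariance gives $f_\omega(u^-)=f_\omega(u_0)$, and the block coordinates of $u_0$ are $z_{\mathrm{block}}+O(z^2)$. Hence the lowest homogeneous component of $f_\omega|_{U^-}$, which contains the least term for degree reverse lexicographic order, involves only block variables. Inequality 4 via the one-entry Dyck path is also fine.

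Step 2 for inequalities 2 and 3, however, is only a hope. What an $\mathfrak{sl}_2$-string argument with $e_{-\beta_j}$ naturally controls is the $\mathfrak h$-weight, i.e.\ $l_j=k_j-\sum_r p_{r,j}+\sum_r p_{r,j+1}\ge 0$. But $v_{n+m,j}\le k_j$ is equivalent to $p_{n+m,j+1}\le l_j$, which is strictly stronger. The inequalities $v_{i,j}\le k_j$ for intermediate rows also separate signatures of equal weight, so they are statements about least terms that no weight argument detects. The paper gets them differently. A lowest vector for $\SL_n\times\SL_m$ is lowest for each factor, so $\Sigma_{n,m}\subseteq\Sigma_{n,1}\cap\Sigma_{1,m}$, and the one-factor semigroups are imported from earlier work (Lemma~\ref{lemma 1}, transported by the symmetry $A\mapsto(A^{-1})^t$).

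The decisive gap is Step 3, which is the real content of the theorem. You do not know the Hilbert basis; the guess that generators have $\lambda$ a sum of at most two fundamental weights is unsupported. You also have no normality statement and no matching of integral points with Littlewood--Richardson multiplicities, and the greedy peeling is not shown to stay in the cone. Your fallback (b) has the right logical shape (an inclusion plus equal counts). The missing idea is how to get the count without ever computing $m_{\lambda,\lambda'}$.

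The paper does this by proving the more general Theorem~\ref{general theorem} for $\SL_l\times\SL_s$ with $l+s\le n+m$. There inequality 4 is replaced by all Dyck path inequalities, which is necessary because shrinking the first block opens a gap between the blocks. It then inducts on $l$:
\begin{enumerate}
\item[(i)] Pair $\sigma\in\Sigma_{l,s}(\lambda)$ with interlacing data $\mathbf t$ for $\SL_l\downarrow\SL_{l-1}$. Lemma~\ref{lemma 1} and induction give $|\Sigma^t_{l,s}(\lambda)|=|\Sigma_{l-1,s}(\lambda)|=|\tilde\Sigma_{l-1,s}(\lambda)|$.
\item[(ii)] The easy inclusion $\Sigma_{l,s}\subseteq\tilde\Sigma_{l,s}$ gives $|\tilde\Sigma^t_{l,s}(\lambda)|\ge|\Sigma^t_{l,s}(\lambda)|$.
\item[(iii)] The only combinatorics needed is an injection $\phi_{l,s}:\tilde\Sigma^t_{l,s}\to\tilde\Sigma_{l-1,s}$, built by the left-shift algorithm, which gives the reverse inequality.
\end{enumerate}
Equal cardinalities then force $\Sigma_{l,s}=\tilde\Sigma_{l,s}$. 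Because one inequality comes free from the easy inclusion, an injection rather than a bijection suffices, and no generators or saturation argument is ever needed.
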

\begin{remark}{\label{remark 3}}
\emph{The inequalities for $\Sigma^{\mathbb{Q}}_{n,m}$ imply the inequalities for $\Sigma^{\mathbb{Q}}$. Indeed, if $D$ is a Dyck path that bends only at the $(l,s)$-th entry  then the inequality $\sum_{(i,j)\in D}p_{i,j}\leq k_{s}+\ldots+k_{l-1}$ can be seen as $v_{l,s}+v_{l,s+1}+\ldots+v_{l,l-1}\leq k_s+\ldots+k_{l-1}$. Obviously, the last inequality is the sum of inequalities $v_{l,j}\leq k_j$ for $j=s,\ldots,l-1$. Similar arguments applicable for an arbitrary Dyck path $D$.}
\end{remark}
\begin{example}
\emph{Let $\lambda=k_1\omega_1+k_2\omega_2+k_3\omega_3$ be a dominant weight of $\SL_{4}$. The cone $\Sigma^{\mathbb{Q}}_{2,2}$ is given by the following inequalities and equalities:}
\begin{enumerate}
\item $p_{2,1}=0,\\
 p_{4,3}=0,$
\item $v_{3,1}=p_{3,1}\leq k_1,\\
 v_{4,1}=p_{4,1}+p_{3,1}-p_{3,2}\leq k_1,$
\item $h_{4,1}=p_{4,1}+p_{4,2}-p_{3,2}\leq k_3,\\
 h_{4,2}=p_{4,2}\leq k_3,$
\item $p_{2,2}\leq k_2.$
\end{enumerate}
\end{example}
Theorem \ref{main theorem} is a special case of a more general theorem stated below:
\begin{theor}{\label{general theorem}}
Let $\lambda=\sum_{i=1}^{n+m-1}k_i\omega_i$ be a dominant weight of $\SL_{n+m}$. The cone $\Sigma^{\mathbb{Q}}_{l,s}$ is given by the following inequalities and equalities:
\begin{enumerate}
\item[0.] $p_{i,j}\geq 0$,
\item[1.] $p_{i,j}=0,\quad i<l+1\quad \textrm{or}\quad j>n+m-s$,
\item[2.] $ v_{i,j}\leq k_j,\quad 1\leq j\leq l-1$,
\item[3.] $ h_{i,j}\leq k_{i-1},\quad n+m-s+2\leq i\leq n+m$,
\item[4.] $(\lambda;\bold{p})\in\Sigma$, i.e. must satisfy all inequalities defining $\Sigma^{\mathbb{Q}}.$
\end{enumerate}
The branching semigroup $\Sigma_{l,s}$ is the semigroup of integral points in $\Sigma^{\mathbb{Q}}_{l,s}$.
\end{theor}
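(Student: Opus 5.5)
We prove the two inclusions between $\Sigma_{l,s}$ and the set $C_{l,s}$ of integral points satisfying 0--4 separately. Two observations drive both directions. First, $f_\omega$ is invariant under $U^-_{\mathfrak h}$ exactly when $\omega$, viewed as a linear function on $V(\lambda)$, is annihilated by the lowering operators $e_{-(\varepsilon_a-\varepsilon_b)}$ of $\mathfrak h$. Here $a<b\le l$ or $n+m-s<a<b$, acting on $V(\lambda)^*$ via the contragredient action. Second, $U^-=U^-_{\mathfrak h}\cdot U'$, where $U'$ is generated by the root subgroups $U_{-\alpha}$ with $\alpha\notin\Delta_+(\mathfrak h)$. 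For the chosen ordering of roots, the factors $\exp(z_ie_{-\alpha_i})$ with $\alpha_i\in\Delta_+(\mathfrak h)$ can be collected into the upper-left and lower-right blocks of the ordered product.

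\emph{Necessity.} Take a $U^-_{\mathfrak h}$-invariant $f_\omega$. We check conditions 0--4 for $\sign(f_\omega)$ in turn.
\begin{itemize}
\item Conditions 0 and 4 hold because $\sign(f_\omega)\in\Sigma$ and by Theorem 1 (FFL).
\item Condition 1: $f_\omega|_{U^-}$ is invariant under left multiplication by $U^-_{\mathfrak h}$, so it is a function of the coordinates of $U^-_{\mathfrak h}\backslash U^-$. We intend to show that the least monomial cannot involve $z_i$ for roots of $\mathfrak h$. If it did, left translation by $\exp(te_{-\alpha})$ with $\alpha$ the relevant simple root of $\mathfrak h$ would act on $z$-coordinates by a triangular substitution. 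Comparing least terms would then contradict invariance. Concretely, the derivative $\partial/\partial t$ at $t=0$ acts on monomials as a first-order operator whose leading effect lowers the exponent at $\alpha$. This also accounts for zeros at the entries $(i,j)$ with $i\le l$ or $j>n+m-s$ coming from non-simple roots of the blocks.
\item Conditions 2 and 3: these are the real content. For $j\le l-1$, the function $f_\omega$ is a lowest vector for $e_{-\beta_j}\in\mathfrak h$, hence a highest vector for the $\mathfrak{sl}_2$-triple of $\beta_j$ after the opposite identification. Its $\beta_j$-weight therefore bounds the $\mathfrak{sl}_2$-string. Computing $\langle \wt(\sigma),\beta_j^\vee\rangle$ directly gives $k_j-\sum_i p_{i,j}+\sum_i p_{i,j+1}$. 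Using condition 1, the inequality $v_{i,j}\le k_j$ for the \emph{full} column is exactly nonnegativity of this pairing. For the partial sums $v_{i,j}$ with intermediate $i$, a plain weight argument is not enough, so we would apply the same reasoning to a sub-factorization. We restrict $f_\omega$ to the subgroup where only the entries in rows $\le i$ are nonzero, which gives a Levi-type truncation. Condition 3 is symmetric: the lower-right block with rows $h$ is obtained via the transpose-antidiagonal automorphism of $\SL_{n+m}$, which swaps the roles of the two blocks.
\end{itemize}

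\emph{Sufficiency.} By Theorem 1, every integral point of $\Sigma^{\mathbb Q}$ is a sum of points from the $\Sigma(\omega_i)$. We would show that $C_{l,s}$ is likewise generated by finitely many explicit signatures, each realized as $\sign$ of an explicit $U^-_{\mathfrak h}$-invariant function. The candidates are products of minors $\Delta_{I}(g)$, taken from the first $k$ columns of $g$ with row sets $I$, that are invariant under the block lower-unitriangular left action of $H$. These are the minors with $I\supseteq\{1,\dots,a\}$ and $I\cap\{n+m-s+1,\dots,n+m\}$ a terminal segment. One computes their least terms directly: each is a single path-shaped filling in the lower-left corner, with weight $\omega_k$. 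Since $\sign$ is additive, it then suffices to prove that $C_{l,s}$ is generated by these signatures, i.e.\ that the polytope $C_{l,s}(\lambda)$ has the integer decomposition property, $C(\lambda+\mu)=C(\lambda)+C(\mu)$.

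\emph{Main obstacle.} We expect the main obstacle to be this last decomposition statement; it is presumably the technical lemma the paper mentions. The first attempt would be a greedy peeling. Given $\sigma\in C_{l,s}(\lambda)$ with $k_r>0$, we produce one admissible minor signature $\tau$ of weight $\omega_r$ such that $\sigma-\tau\in C_{l,s}(\lambda-\omega_r)$. To choose $\tau$, walk a Dyck-type path through the entries $p_{i,j}$ that are tight in the inequalities $v_{i,j}\le k_j$ and $h_{i,j}\le k_{i-1}$, subtracting $1$ along it. One then checks that every tight inequality drops by exactly the amount that $k$ drops, and that none becomes violated. The telescoping form of $v$ and $h$, where the differences involve column $j+1$ and row $i-1$, is designed to make exactly this bookkeeping work.
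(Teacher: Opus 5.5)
Your sufficiency half cannot work, because its central claim is false. The branching semigroup is not generated by signatures of weight $\omega_k$, and $C(\lambda+\mu)\neq C(\lambda)+C(\mu)$ in general.

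Take the paper's Example, $\SL_4\supset\SL_2\times\SL_2$. Here $\mathbb{C}^4$ and its dual each split into two irreducibles, so $|\Sigma_{2,2}(\omega_1)|=|\Sigma_{2,2}(\omega_3)|=2$. The adjoint representation restricts as $\mathfrak{sl}_2\oplus\mathfrak{sl}_2\oplus\mathbb{C}\oplus 2(\mathbb{C}^2\otimes\mathbb{C}^2)$, so $|\Sigma_{2,2}(\omega_1+\omega_3)|=5>2\cdot 2$. This count comes from representation theory alone, independent of any inequalities.

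Explicitly, write $e_{i,j}$ for the signature whose only nonzero exponent is $p_{i,j}=1$. The inequalities give $C(\omega_1)=\{0,e_{3,1}\}$ and $C(\omega_3)=\{0,e_{4,2}\}$. The extra point of $C(\omega_1+\omega_3)$ is $e_{4,1}$, which has weight $0$: it is the least term of the $H$-invariant. Since $\sign$ is additive, no product of your weight-$\omega_k$ minors has this least term. Your greedy peeling also fails on it:
\begin{itemize}
\item subtracting $0$ leaves $e_{4,1}$ with highest weight $\omega_3$, which violates $v_{4,1}\le k_1$, or with highest weight $\omega_1$, which violates $h_{4,1}\le k_3$;
\item subtracting $e_{3,1}$ or $e_{4,2}$ creates a negative entry.
\end{itemize}

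The missing idea is that sufficiency need not be proved by realizing each lattice point as the least term of an explicit invariant. The paper argues by counting, by induction on $l$:
\begin{itemize}
\item Restricting in stages through $\SL_l\times\SL_s\supset\SL_{l-1}\times\SL_s$ with the Gelfand--Tsetlin rule (Lemma 1) gives $|\Sigma^t_{l,s}(\lambda)|=|\Sigma_{l-1,s}(\lambda)|$.
\item By induction, this equals $|\tilde\Sigma_{l-1,s}(\lambda)|$.
\item The injection $\phi_{l,s}:\tilde\Sigma^t_{l,s}\to\tilde\Sigma_{l-1,s}$ gives the opposite bound on $|\tilde\Sigma^t_{l,s}(\lambda)|$. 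This injection is the paper's technical lemma, not a decomposition statement.
\item Together with the inclusion $\Sigma_{l,s}\subset\tilde\Sigma_{l,s}$, these force $\Sigma^t_{l,s}=\tilde\Sigma^t_{l,s}$, hence $\Sigma_{l,s}=\tilde\Sigma_{l,s}$.
\end{itemize}

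Your necessity argument for conditions 2 and 3 is also incomplete. Dominance, $\langle\wt(\sigma),\beta_j^\vee\rangle\ge 0$, reads $\sum_i p_{i,j}-\sum_i p_{i,j+1}\le k_j$. But $v_{n+m,j}$ omits the term $p_{n+m,j+1}$, so $v_{n+m,j}\le k_j$ is strictly stronger than dominance. Contrary to what you assert, even the full-column inequality does not follow from the weight argument.

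For the partial sums $v_{i,j}$, restricting $f_\omega$ to the subgroup with nonzero entries only in rows $\le i$ kills variables that typically occur in $\sign(f_\omega)$. The least term of the restriction is then unrelated to $\sign(f_\omega)$, and no inequality transfers. The paper gets necessity differently. It uses $\Sigma_{l,s}\subset\Sigma_{l,1}\cap\Sigma_{1,s}$, since a lowest vector for $\SL_l\times\SL_s$ is lowest for each factor. It then combines this with the one-block descriptions supplied by Lemma 1 and, via the antidiagonal transpose, Corollary 1.
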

Denote by $\tilde{\Sigma}_{l,s}$ all integer solutions of the system of inequalities and equalities from Theorem \ref{general theorem}. Theorem \ref{general theorem} says that $\tilde{\Sigma}_{l,s}=\Sigma_{l,s}$.
\begin{remark}\label{transpose}
\emph{There is an obvious symmetry in the inequalities defining $\Sigma_{l,s}$ and $\Sigma_{s,l}$. Recall that a signature $\sigma$ can be seen as a $(n+m)\times(n+m)$ matrix. It is easy to see that if $\sigma\in\Sigma_{l,s}(\lambda)$ then $\sigma^{t}\in\Sigma_{s,l}(\lambda^*)$, where $t$ sends a matrix to its transpose with respect to the secondary diagonal and $\lambda^*$ denotes the highest weight of $V(\lambda)^*$. This symmetry comes from the automorphism of $\SL_{n+m}$ that sends a matrix $A$ to $(A^{-1})^t$ and swaps $\SL_{l}\times\SL_{s}$ and $\SL_{s}\times\SL_{l}$. Moreover, if $\sigma$ corresponds to a representation (of $\SL_l\times\SL_s$) with the highest weight $\mu$ with respect to the maximal torus $T$ in $\SL_{m+n}$ then $\sigma^t$ corresponds to a representation (of $\SL_s\times\SL_l$) with the highest weight $\mu^*$. In particular, the dimensions of these two representations coincide.}
\end{remark}

\begin{remark}
\emph{Similar to Remark \ref{remark 3} one can show that in $4.$ it is enough to consider those Dyck paths $D$ that starts at the $(r,r)$ entry and ends at the $(t,t)$ entry with $r\geq l$ and $t\leq n+m-s+1.$ In particular, Theorem \ref{general theorem} implies Theorem \ref{main theorem}.}
\end{remark}
\begin{lemma}\label{lemma 1}
Let $\lambda=\sum_{i=1}^{l-1}k_i\omega_i$ be a dominant weight of $\SL_l$. Then $\Sigma_{l-1,1}$ is the semigroup of integral points satisfying the following inequalities and equalities:
\begin{enumerate}
\item $p_{i,j}=0,\quad i\neq l,$
\item $p_{l,j}\leq k_j,\quad j=1,\ldots,l-1$.
\end{enumerate}
Moreover, the semigroup $\Sigma_{l-1,1}$ is generated by $\Sigma_{l-1,1}(\omega_i)$.
\end{lemma}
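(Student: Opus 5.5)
The plan is to prove the support condition (1) directly from the invariance property, then get the bounds (2) and surjectivity together from a generator construction plus a dimension count coming from the classical Gelfand--Tsetlin branching rule $\mathrm{GL}_l\downarrow \mathrm{GL}_{l-1}$.

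\emph{Support in the last row.} Let $f=f_\omega\in\mathbb{C}[G/U]^{U^-_{\mathfrak h}}$, where $U^-_{\mathfrak h}$ is the lower unitriangular group of the upper left $(l-1)\times(l-1)$ block. Left multiplication of $u^-$ by an element of $U^-_{\mathfrak h}$ mixes rows $1,\dots,l-1$ of the matrix $u^-$ and leaves its last row unchanged. Moreover, $U^-_{\mathfrak h}$ acts simply transitively on the possible upper blocks. Hence $f|_{U^-}=F(a_1,\dots,a_{l-1})$ for some polynomial $F$, where $a_j=(u^-)_{l,j}$ are the last-row entries. Expanding $u^-=\prod\exp(z_ie_{-\alpha_i})$, each $a_j$ equals $z_{l,j}$ (the coordinate of the root $\varepsilon_j-\varepsilon_l$) plus terms of degree $\ge2$ in the $z$'s.

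Since the order is degree reverse lexicographic, the least term of $F(a)$ is found as follows. Take the lowest-degree homogeneous component of $F$. Substitute $a_j\mapsto z_{l,j}$ in it. This is injective on monomials, so there is no cancellation. Then take the least monomial of the result. Therefore every $\sigma\in\Sigma_{l-1,1}$ has $p_{i,j}=0$ for $i\ne l$, which is condition (1).

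\emph{Generators.} For $\lambda=\omega_i$ we have $V(\omega_i)^*\cong\bigwedge^i(\mathbb{C}^l)^*$, and the functions $f_\omega$ are $i\times i$ minors of $g$ on the columns $1,\dots,i$. I will use two such minors.
\begin{itemize}
\item The minor with rows $\{1,\dots,i\}$ is $U^-_{\mathfrak h}$-invariant. On $U^-$ it equals $1$, so its signature is $(\omega_i;0)$.
\item The minor with rows $\{1,\dots,i-1,l\}$ is also invariant. On $U^-$ it equals $a_i$ plus higher-degree terms, so its signature is $(\omega_i;p_{l,i}=1)$.
\end{itemize}
Since signatures add under multiplication, the semigroup generated by these elements contains every integral point $(\lambda;\mathbf p)$ with $\lambda=\sum k_i\omega_i$, support in the last row, and $0\le p_{l,j}\le k_j$. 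Call this set $\tilde\Sigma(\lambda)$; it has exactly $\prod_j(k_j+1)$ elements. By construction $\tilde\Sigma(\lambda)\subseteq \Sigma_{l-1,1}(\lambda)$.

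\emph{Counting.} Distinct signatures correspond to linearly independent lowest vectors. So, by the discussion in \ref{branching semigroup}, $|\Sigma_{l-1,1}(\lambda)|$ is the total number of irreducible components of $V(\lambda)|_{\SL_{l-1}}$, counted with multiplicity. Because the extra torus commutes with $\SL_{l-1}$, this equals the number of components of the restriction to $\mathrm{GL}_{l-1}$. By the classical Gelfand--Tsetlin rule that restriction is multiplicity free, with components indexed by the $\mu$ satisfying $\lambda_1\ge\mu_1\ge\lambda_2\ge\dots\ge\mu_{l-1}\ge\lambda_l$. Since $\lambda_j-\lambda_{j+1}=k_j$, there are $\prod_j(k_j+1)$ of them.

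Hence $\tilde\Sigma(\lambda)=\Sigma_{l-1,1}(\lambda)$. This yields the inequalities (2) and the claim that $\Sigma_{l-1,1}$ is generated by the sets $\Sigma_{l-1,1}(\omega_i)$.

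\emph{Main obstacle.} The most delicate point is the first step: justifying that the least term of $F(a_1,\dots,a_{l-1})$ really comes from the substitution $a_j\mapsto z_{l,j}$. This requires checking that, in the chosen ordering of roots, the linear part of $(u^-)_{l,j}$ is exactly $z_{l,j}$. It also requires checking that degree reverse lexicographic order makes lowest degree dominate, which I expect to be routine. A secondary point is confirming that the two minors used as generators are genuinely left $U^-_{\mathfrak h}$-invariant. This holds because their row sets are unions of an initial segment $\{1,\dots,r\}$ of rows with possibly row $l$, and such row sets are stable under lower unitriangular row operations within the first $l-1$ rows.
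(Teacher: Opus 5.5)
Your proof is correct. It necessarily takes a different route from the paper, because the paper does not prove this lemma at all: it defers to the author's earlier work [G3]. Your argument is self-contained and follows the same ``explicit elements plus counting'' pattern the paper itself uses in Corollary~\ref{corollary 1} and in the proof of Theorem~\ref{general theorem}:
\begin{itemize}
\item The minors $M_{\{1,\dots,i\}}$ and $M_{\{1,\dots,i-1,l\}}$ on columns $1,\dots,i$ are $U^-_{\mathfrak h}$-invariant, with signatures $(\omega_i;0)$ and $(\omega_i;p_{l,i}=1)$.
\item Additivity of $\sign$ then gives $\tilde\Sigma(\lambda)\subseteq\Sigma_{l-1,1}(\lambda)$.
\item The Gelfand--Tsetlin interlacing rule supplies the count $\prod_j(k_j+1)$.
\end{itemize}
What this buys over the citation is an explicit lowest vector for every signature, namely $\prod_j M_{\{1,\dots,j\}}^{k_j-p_{l,j}}M_{\{1,\dots,j-1,l\}}^{p_{l,j}}$. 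This makes the generation by the sets $\Sigma_{l-1,1}(\omega_i)$ transparent.

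One simplification: your first step (support in the last row), which you identify as the main obstacle, is not needed. Functions with distinct least terms are linearly independent, so
$|\Sigma_{l-1,1}(\lambda)|\le\dim\bigl(V(\lambda)^*\bigr)^{U^-_{\mathfrak h}}=\prod_j(k_j+1)=|\tilde\Sigma(\lambda)|$.
Together with $\tilde\Sigma(\lambda)\subseteq\Sigma_{l-1,1}(\lambda)$, this already forces equality and gives conditions 1 and 2 at once. Your argument for that step is nonetheless correct. It is also a special case of the paper's general lemma that exponents of roots of $\mathfrak h$ vanish for a homogeneous order.

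Finally, the passage from $\SL_{l-1}$ to $\mathrm{GL}_{l-1}$ is best justified by Schur's lemma rather than by the torus merely commuting with $\SL_{l-1}$. The center of $\mathrm{GL}_{l-1}$ acts by scalars on each irreducible, so irreducible $\mathrm{GL}_{l-1}$-modules stay irreducible over $\SL_{l-1}$.
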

\begin{proof}
This lemma is proved in \cite{[G3]}.
\end{proof}
In particular, this lemma says that $\Sigma_{l-1,1}=\tilde{\Sigma}_{l-1,1}$. 
\begin{sled}\label{corollary 1}
$\Sigma_{1,l-1}=\tilde{\Sigma}_{1,l-1}$.
\end{sled}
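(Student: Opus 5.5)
The plan is to deduce the corollary from Lemma \ref{lemma 1} using the symmetry of Remark \ref{transpose}. Here we work in $\SL_l$, so $n+m=l$. Let $\theta$ be the automorphism $A\mapsto w_0 (A^{-1})^t w_0^{-1}$, with $w_0$ the antidiagonal permutation matrix. Then $\theta$ preserves $T$, $U$ and $U^-$, and it swaps the upper-left block $\SL_{l-1}$ with the lower-right block $\SL_{l-1}$. Consequently $\theta$ interchanges $\SL_{l-1}\times\SL_1$ and $\SL_1\times\SL_{l-1}$.

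\textbf{Step 1: transport of functions.} Composition with $\theta$ carries $\mathbb{C}[G]^{(B)}_\lambda$ to $\mathbb{C}[G]^{(B)}_{\lambda^*}$, where $\lambda^*=\sum k_{l-i}\omega_i$. It also carries $U^-_{\mathfrak h}$-invariants for $\mathfrak h=\mathfrak{sl}_{l-1}\oplus\mathfrak{sl}_1$ to $U^-_{\mathfrak h'}$-invariants for $\mathfrak h'=\mathfrak{sl}_1\oplus\mathfrak{sl}_{l-1}$. On $U^-$, $\theta$ sends $e_{-(\varepsilon_j-\varepsilon_i)}$ to $\pm e_{-(\varepsilon_{l+1-i}-\varepsilon_{l+1-j})}$. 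So in the coordinates $z$, the restriction $f\circ\theta|_{U^-}$ is obtained from $f|_{U^-}$ by the substitution $p_{i,j}\mapsto p_{l+1-j,\,l+1-i}$, up to signs and up to the reordering forced by the ordered product $U_{-\alpha_1}\cdots U_{-\alpha_N}$.

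\textbf{Step 2: least terms.} I will check that the least term of $f\circ\theta$ is the transpose of the least term of $f$. For Lemma \ref{lemma 1}, every signature in $\Sigma_{l-1,1}$ is supported on row $l$. Its transpose is supported on column $1$, and a single column corresponds to roots $\varepsilon_1-\varepsilon_i$ that commute pairwise. The exponential product on these coordinates is therefore insensitive to ordering, and no error terms arise there. In general, however, the monomial order is not transpose-invariant. The safer route is to argue directly on the basis level: for signatures supported in column $1$, the vectors $v(\sigma)$ are monomials in commuting operators. I expect this comparison of orders to be the main obstacle.

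\textbf{Step 3: conclusion.} Transposing the system of Lemma \ref{lemma 1} gives $p_{i,j}=0$ for $j\neq1$ and $p_{i,1}\le k_{i-1}$ for $i=2,\dots,l$. These are exactly the conditions of Theorem \ref{general theorem} with $(l,s)$ replaced by $(1,l-1)$. Indeed, condition 1 leaves only column $1$. Condition 3 becomes $h_{i,1}=p_{i,1}-p_{i-1,2}-\dots=p_{i,1}\le k_{i-1}$, since all entries off column $1$ vanish. Condition 2 is empty. Condition 4 (the Dyck path inequalities) follows from condition 3: a Dyck path meeting column $1$ contains a single entry $(i,1)$, which gives $p_{i,1}\le k_1+\dots+k_{i-1}$.

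Combining the transpose bijection $\Sigma_{l-1,1}(\lambda)\leftrightarrow\Sigma_{1,l-1}(\lambda^*)$ with Lemma \ref{lemma 1} gives $\Sigma_{1,l-1}=\tilde\Sigma_{1,l-1}$. As a cross-check independent of the monomial order, dimension counting also works: both sides parametrize the decomposition of $V(\lambda^*)$ into $\SL_{l-1}$-irreducibles. Once the inclusion $\Sigma_{1,l-1}\subseteq\tilde\Sigma_{1,l-1}$ is shown directly (support in column $1$ and the bounds, via Step 2), the equal cardinalities for each highest weight and each restricted weight give equality.
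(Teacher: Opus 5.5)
\textbf{There is a genuine gap, at exactly the step you flag as the main obstacle.}

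\emph{The transpose bijection is not established.} Your last paragraph uses a bijection $\Sigma_{l-1,1}(\lambda)\leftrightarrow\Sigma_{1,l-1}(\lambda^*)$ between the \emph{actual} branching semigroups. That needs Step 2: the least term of $f\circ\theta|_{U^-}$ must be the transpose of the least term of $f|_{U^-}$. This is not automatic, for two reasons.
\begin{itemize}
\item The degree reverse lexicographic order is not invariant under $(i,j)\mapsto(l+1-j,l+1-i)$.
\item $\theta$ sends the fixed ordered product $U_{-\alpha_1}\cdots U_{-\alpha_N}$ to a product in a different order. Rewriting it in the fixed coordinates is a non-linear change of variables.
\end{itemize}
The fact that column-$1$ root vectors commute only describes the support of the expected answer. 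It does not tell you which monomial of $f\circ\theta|_{U^-}$ is least.

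\emph{The fallback has the same hole.} It needs the inclusion $\Sigma_{1,l-1}\subseteq\tilde\Sigma_{1,l-1}$. Column-$1$ support does follow from the lemma on exponents of roots of $\mathfrak h$. But the bounds $p_{i,1}\le k_{i-1}$ do not follow from the Dyck path inequalities of $\Sigma$, which only bound the partial sums $p_{2,1}+\dots+p_{i,1}$. For example, take $\SL_3$ and $\lambda=\omega_1$. The signature with $p_{3,1}=1$ lies in $\Sigma$, but not in $\tilde\Sigma_{1,2}$, since $h_{3,1}=1>k_2=0$. So this inclusion is a real statement that you have not proved.

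\textbf{How the paper avoids this.} It never compares monomial orders; instead it proves the \emph{opposite} inclusion from generators.
\begin{itemize}
\item Transposition is a purely combinatorial semigroup isomorphism $\tilde\Sigma_{l-1,1}\to\tilde\Sigma_{1,l-1}$, with $\lambda\mapsto\lambda^*$. This is your Step 3.
\item Lemma~\ref{lemma 1} also says that $\Sigma_{l-1,1}=\tilde\Sigma_{l-1,1}$ is generated by its pieces of fundamental highest weight. Hence $\tilde\Sigma_{1,l-1}$ is generated by the $\tilde\Sigma_{1,l-1}(\omega_i)$.
\item These pieces coincide with $\Sigma_{1,l-1}(\omega_i)$ by a direct check. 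With $W=\langle e_2,\dots,e_l\rangle$ one has $\bigwedge^i\mathbb{C}^l=e_1\wedge\bigwedge^{i-1}W\oplus\bigwedge^iW$, and the two components have signatures $0$ and $p_{i+1,1}=1$.
\item Since $\Sigma_{1,l-1}$ is closed under addition, this gives $\tilde\Sigma_{1,l-1}\subseteq\Sigma_{1,l-1}$.
\end{itemize}
Your count is the right way to finish. Via $\theta$, the number of $\SL_{l-1}$-components of $V(\lambda^*)$ equals that of $V(\lambda)$, which is $|\tilde\Sigma_{l-1,1}(\lambda)|=|\tilde\Sigma_{1,l-1}(\lambda^*)|$. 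This upgrades the inclusion to equality.

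\textbf{A minor correction to Step 3.} A Dyck path meeting column $1$ starts at $(2,1)$, so it contains all of $(2,1),\dots,(i,1)$, not a single entry. Also, condition 3 for $(1,l-1)$ covers only $3\le i\le l$; the bound $p_{2,1}\le k_1$ comes from condition 4. The two systems still coincide, because the entrywise bounds imply the partial-sum bounds.
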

\begin{proof}
We have an isomorphism between $\tilde{\Sigma}_{l-1,1}$ and $\tilde{\Sigma}_{1,l-1}$ (see Remark \ref{transpose}) sending $\sigma\in\tilde{\Sigma}_{l-1,1}(\lambda)$ to $\sigma^t\in\tilde{\Sigma}_{1,l-1}(\lambda^*)$. One can easily check that $\Sigma_{1,l-1}(\omega_i)=\tilde{\Sigma}_{1,l-1}(\omega_i)$. Since $\Sigma_{l-1,1}(\omega_i)$ generates $\Sigma_{l-1,1}$ we see that $\Sigma_{1,l-1}(\omega_i^*)$ ($=\tilde{\Sigma}_{1,l-1}(\omega_i^*)$) generate $\tilde{\Sigma}_{1,l-1}$. Therefore $\tilde{\Sigma}_{1,l-1}\subset\Sigma_{1,l-1}$. The equality follows from dimension arguments: $\dim V(\lambda)=\dim V(\lambda^*)$ and hence $|\Sigma_{l-1,1}(\lambda)|=|\tilde{\Sigma}_{1,l-1}(\lambda^*)|=|\Sigma_{1,l-1}(\lambda^*)|$, where in the last equality we use the fact that $\sigma$ and $\sigma^t$ correspond to representations of the same dimension.
\end{proof}
\begin{remark}
\emph{In what follows it will be convenient for us to understand an element of $\Sigma_{l-1,1}(\lambda)$ with some fixed highest weight $\lambda$ as a tuple $\textbf{t}=(t_1,\ldots,t_{l-1}),$ where $t_i=p_{l,i}$. Then if one restricts a representation $V_{\SL_l}(\lambda)$ to $\SL_{l-1}$ the tuples $\textbf{t}\in\Sigma_{l-1,1}$ parametrize the irreducible representations of $\SL_{l-1}$ in $V_{\SL_l}(\lambda)|_{\SL_{l-1}}$. One has}
$$
V_{\SL_{l}}(\lambda)|_{\SL_{l-1}}=\bigoplus_{\textbf{t}\leq \lambda}V_{\SL_{l-1}}(\lambda-t_1\varepsilon_1-\ldots-t_{n-1}\varepsilon_{n-1}),
$$
\emph{where $\textbf{t}\leq\lambda$ means $t_i\leq k_i, \lambda=\sum_{i=1}^{l-1} k_i\omega_i$}.
\end{remark}

The next lemma explains why all exponents of $\sigma\in\Sigma_{l,s}$ corresponding to all roots of $\SL_{l}\times\SL_{s}$ are zero, i.e. it explains the equalities of type $1$ in Theorem \ref{general theorem}. We prove a more general result.

Let $\mathfrak{h}\subset\mathfrak{g}$ be a regular embedding of a semisimple Lie algebra $\mathfrak{h}$ such that $\mathfrak{t}_\mathfrak{h}\subset \mathfrak{t}_\mathfrak{g}$ and $\Delta_+(\mathfrak{h})\subset\Delta_+(\mathfrak{g})$. Let $\Sigma=\Sigma_{\mathfrak{g}}$ be an essential semigroup corresponding to some \emph{homogeneous} monomial order, and let $\Sigma_{\mathfrak{h}}$ be the corresponding branching semigroup.
\begin{lemma}
In the above setting if $\sigma\in\Sigma_{\mathfrak{h}}$ then all the exponents of $\sigma$ corresponding to the roots in $\Delta_{+}(\mathfrak{h})$ are zero.
\end{lemma}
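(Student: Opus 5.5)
The plan is to argue by contradiction, directly from the definition of $\Sigma_{\mathfrak{h}}$ and the description of essential signatures as least terms. Let $\sigma=(\lambda;p_1,\dots,p_N)=\sign(f_\omega)$, where $f_\omega\in\mathbb{C}[G/U]^{U^-_{\mathfrak{h}}}$ and $\omega\in V(\lambda)^*$. I will use two facts about $\omega$.

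\emph{First fact: $\omega$ kills $e_{-\beta}V(\lambda)$.} Invariance of $f_\omega$ under left translations by $U^-_{\mathfrak{h}}$ means $\langle\omega,u^{-1}gv_\lambda\rangle=\langle\omega,gv_\lambda\rangle$ for all $g\in G$ and $u\in U^-_{\mathfrak{h}}$. Since the vectors $gv_\lambda$ span $V(\lambda)$, this gives $\omega\circ u=\omega$ on $V(\lambda)$. Differentiating, $\langle\omega,e_{-\beta}v\rangle=0$ for every $\beta\in\Delta_+(\mathfrak{h})$ and every $v\in V(\lambda)$.

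\emph{Second fact: the least term of $f_\omega$.} By the expansion of $f_\omega|_{U^-}$ and the proof of the first Proposition, $\sigma$ being the least term means $\langle\omega,v(\tau)\rangle=0$ for all $\tau<\sigma$ and $\langle\omega,v(\sigma)\rangle\neq0$.

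Now suppose some exponent $p_i>0$ with $\alpha_i\in\Delta_+(\mathfrak{h})$. The key step is to show
$$v(\sigma)\in e_{-\alpha_i}V(\lambda)+\langle v(\tau)\mid \tau<\sigma\rangle .$$
Applying $\omega$ to this would give $\langle\omega,v(\sigma)\rangle=0$, a contradiction.

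To get the inclusion, I write $v(\sigma)=e_{-\alpha_1}^{p_1}\cdots e_{-\alpha_i}^{p_i}\cdots e_{-\alpha_N}^{p_N}v_\lambda$. I move one factor $e_{-\alpha_i}$ to the far left by successive commutations with the factors $e_{-\alpha_j}$, $j<i$, standing to its left. Each commutation produces a commutator term in which the two factors $e_{-\alpha_j},e_{-\alpha_i}$ are replaced by the single vector $[e_{-\alpha_j},e_{-\alpha_i}]$. This vector is either $0$ or a multiple of $e_{-(\alpha_i+\alpha_j)}$. Hence
$$v(\sigma)=e_{-\alpha_i}\,w+\sum(\text{words of length }|p|-1\text{ in the }e_{-\alpha}\text{ applied to }v_\lambda),$$
where $|p|=\sum p_k$.

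Next I use the standard PBW-type observation, proved by induction on length and the same commutation argument. Any word of length $d$ in negative root vectors applied to $v_\lambda$ is a linear combination of ordered monomials $v(\tau)$ with $|\tau|\le d$. Moreover, these $\tau$ have highest weight $\lambda$ and the same weight as the word. Thus every correction term lies in the span of $v(\tau)$ with $|\tau|<|\sigma|$.

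The step I expect to need the most care is turning this into $\tau<\sigma$. This is exactly where homogeneity of the monomial order enters. I interpret ``homogeneous'' as: the order compares total degree first, as degree reverse lexicographic order does. Then $|\tau|<|\sigma|$ forces $\tau<\sigma$, which gives the displayed inclusion and the contradiction. For a non-homogeneous order the lower-degree correction terms could be larger than $\sigma$, and the argument would break.

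One more point to check is that the rewriting never needs to reintroduce degree-$|p|$ terms other than $e_{-\alpha_i}w$. This holds because every commutator strictly shortens the word. Therefore all exponents of $\sigma$ at roots of $\Delta_+(\mathfrak{h})$ vanish.
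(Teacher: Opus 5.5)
Your proof is correct and follows the same idea as the paper: use homogeneity of the order to bring $e_{-\alpha}$ (with $\alpha\in\Delta_+(\mathfrak h)$) to the front of $v(\sigma)$, then use that the $U^-_{\mathfrak h}$-invariant functional $\omega$ vanishes on $e_{-\alpha}V(\lambda)$, which the paper phrases as a weight argument about the projection onto $V_{\mathfrak h}(\mu)$. Your version is more careful, since it explicitly handles the lower-degree commutator terms (killed by $\omega$ because they lie in $\langle v(\tau)\mid\tau<\sigma\rangle$) that the paper passes over when it says one ``can assume that $\alpha$ is the first root in the ordering''.
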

\begin{proof}
Let $\sigma\in\Sigma_{\mathfrak{h}}$. This means that $\sigma=\sign(v_{\mu}^*)$, where $V^*_{\mathfrak{h}}(\mu)\subset V^*_{\mathfrak{g}}(\lambda)$ and $v_{\mu}^*$ is the lowest vector in $V^*_{\mathfrak{h}}(\mu)$. The signature $\sigma$  is the minimal signature in $\Sigma$ satisfying $\langle v^*_{\mu},v(\sigma)\rangle\neq 0.$ The vector $v(\sigma)$ is a weight vector and has the $\mathfrak{h}$-invariant projection $c\cdot v_{\mu}$ to $V_{\mathfrak{h}}(\mu)$, where $v_{\mu}$ is the highest vector and $c\in\mathbb{C}$ is nonzero. Now suppose that $\sigma$ has a nonzero exponent corresponding to a root $\alpha\in\Delta_{\mathfrak{h}}$. Since the monomial order is homogeneous we can assume that $\alpha$ is the first root in the ordering of positive roots $\{\alpha=\alpha_1,\ldots,\alpha_N\}$ which we use to define $\Sigma$. Then one has $v(\sigma)=e_{-\alpha}v(\sigma')$ for some signature $\sigma'$. Then weight argument show that $v(\sigma')$ has zero the $\mathfrak{h}$-invariant projection to $V_{\mathfrak{h}}(\mu)$. This implies that the $\mathfrak{h}$-invariant projection of $v(\sigma)$ is zero as well since $e_{-\alpha}\in\mathfrak{h}$. A contradiction.
\end{proof}
One has 
$$
V_{\SL_{n+m}}(\lambda)|_{\SL_{l}\times\SL_{s}}=\bigoplus_{\sigma\in\Sigma_{l,s}(\lambda)} V_{\SL_{l}\times\SL_{s}}(\wt(\sigma)).
$$  
If we restrict the representation $V_{\SL_{l}\times\SL_{s}}(\wt(\sigma))$ further to $\SL_{l-1}\times\SL_{s}$ then according to lemma \ref{lemma 1} 
$$
V_{\SL_{l}\times\SL_{s}}(\wt(\sigma))|_{\SL_{l-1}\times\SL_{s}}=\bigoplus_{\bf{t}\leq\wt(\sigma)}V_{\SL_{l-1}\times\SL_{s}}(\wt(\sigma)-t_1\varepsilon_1-\ldots-t_{l-1}\varepsilon_{l-1}),
$$
where $\textbf{t}\leq \wt(\sigma)$ means $t_i\leq l_i, \wt(\sigma)=\sum_{i=1}^{l-1}l_i\omega_i$.

 Note also that according to the definition of $\wt(\sigma)$ one has 
 $$
 l_i=k_i-p_{1,i}-\ldots-p_{k,i}+p_{1,i+1}+\ldots+p_{k,i+1}, i=1,\ldots,l-1.
 $$
 This motivates the following notation:
$$
\Sigma^{t}_{l,s}(\lambda)=\{(\sigma,\textbf{t})\mid \sigma\in\Sigma_{l,s}(\lambda), t_i\leq k_i-p_{1,i}-\ldots-p_{k,i}+p_{1,i+1}+\ldots+p_{k,i+1}, i=1,\ldots,l-1\}.
$$ 
  Obviously, $\Sigma^{t}_{l,s}:=\cup_{\lambda}\Sigma^{t}_{l,s}(\lambda)$ is a subsemigroup in $\Sigma_{l,s}\times\mathbb{Z}_{\geq 0}^{l-1}$. In such a pair $(\sigma,\textbf{t})\in\Sigma^{t}_{l,s}(\lambda)$ the irreducible representation of $\SL_{l}\times\SL_{s}$ is parametrized by $\sigma$ and further restriction to $\SL_{l-1}\times\SL_{s}$ is controlled by $\bf{t}$.

\begin{proof}[Proof of Theorem \ref{general theorem}]

Recall that $\tilde{\Sigma}_{l,s}$ denotes all integral solutions of the system of inequalities and equalities from Theorem \ref{general theorem}. Let $\tilde{\Sigma}_{l,s}^{t}$ be a semigroup constructed in the same way as $\Sigma_{l,s}^{t}$. It is easy to see that $\tilde{\Sigma}_{l,s}=\tilde{\Sigma}_{l,1}\cap\tilde{\Sigma}_{1,s}$. We want to show that $\tilde{\Sigma}_{l,s}=\Sigma_{l,s}$. The main ingredient in our proof of Theorem \ref{general theorem} is the following lemma:
\begin{lemma}\label{main lemma}
There exists an injective map $\phi_{l,s}: \tilde{\Sigma}_{l,s}^{t}\to\tilde{\Sigma}_{l-1,s}$.
\end{lemma}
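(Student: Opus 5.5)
The plan is to construct $\phi_{l,s}$ explicitly as a combinatorial map that takes a pair $(\sigma,\textbf{t})$ with $\sigma=(\lambda;\textbf{p})\in\tilde{\Sigma}_{l,s}(\lambda)$ and returns a signature with the same highest weight $\lambda$. That signature should live in $\tilde{\Sigma}_{l-1,s}(\lambda)$, so it must satisfy the equalities and inequalities of Theorem \ref{general theorem} with $l$ replaced by $l-1$. The guiding principle is the branching chain $\SL_{n+m}\supset\SL_l\times\SL_s\supset\SL_{l-1}\times\SL_s$. The new signature should have weight $\wt(\sigma)-t_1\varepsilon_1-\ldots-t_{l-1}\varepsilon_{l-1}$, up to the weights that are invisible on $\SL_{l-1}\times\SL_s$. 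In matrix language, passing from $\Sigma_{l,s}$ to $\Sigma_{l-1,s}$ frees exactly row $l$ (entries $p_{l,j}$ with $j<l$). So I would define $\phi_{l,s}(\sigma,\textbf{t})=(\lambda;\textbf{q})$ by writing the vector $\textbf{t}$ into row $l$ and correcting the remaining rows so that the vertical quantities $v_{i,j}$ stay controlled. Concretely, I would try $q_{l,j}=t_j$ and $q_{i,j}=p_{i,j}$ for $i>l$, and check whether this already works. If it does not, I would replace it by a ``sliding'' rule in which $q_{l,j}$ absorbs $t_j$ and the excess is pushed down the column, via $q_{i,j}=p_{i,j}+(\text{shift from }t_j)-(\text{shift from }t_{j+1})$. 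The shifts would be chosen so that the telescoping expressions $v_{i,j}$ for the new signature equal the old $v_{i,j}$ plus $t_j$, minus the contribution absorbed by $\wt$.

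The key steps, in order, are as follows. First, verify the equalities of type 1 for $l-1$: the new signature must vanish in rows $<l$ and in columns $>n+m-s$. Second, verify the vertical inequalities $v_{i,j}\le k_j$ for $j\le l-2$. For rows $i>l$ these should follow from the old inequalities $v_{i,j}\le k_j$. For the new row $i=l$ they should follow from the defining condition of $\Sigma^t_{l,s}$,
$$t_j\le k_j-\sum_{r}p_{r,j}+\sum_r p_{r,j+1},$$
because $v_{l,j}$ for the new signature is exactly $t_j$ plus column sums of $\textbf{p}$ minus adjacent column sums. The telescoping in the definition of $v_{i,j}$ is designed for exactly this. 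Third, verify the horizontal inequalities $h_{i,j}\le k_{i-1}$ for $i\ge n+m-s+2$. These involve rows below $l$, and they should be inherited from $\sigma$ together with the bound $t_j\ge0$ once one checks how $h$ changes. Fourth, verify the Dyck path inequalities of type 4. I would use the reduction from the remark following Remark \ref{transpose}: it suffices to check paths between diagonal entries $(r,r)$ and $(t,t)$ with $r\ge l-1$ and $t\le n+m-s+1$. Such a path decomposes into vertical and horizontal segments, so its inequality is a sum of inequalities of types 2 and 3, as in Remark \ref{remark 3}, plus the new row-$l$ terms controlled by $\textbf{t}$.

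Injectivity should come from invertibility of the construction. Given $\textbf{q}$, one recovers $\textbf{t}$ from row $l$, namely $t_j=q_{l,j}$ after undoing the shifts, and then recovers $\textbf{p}$ by subtracting. Since each step is a triangular unimodular change of variables, the map is injective even on the rational cones.

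I expect the main obstacle to be the third and fourth steps: showing that the horizontal constraints and the full set of Dyck path constraints survive when row $l$ is populated. The horizontal quantities $h_{i,j}$ for rows near $l$ mix entries of consecutive rows. A naive placement of $\textbf{t}$ in row $l$ may break $h_{l+1,j}\le k_l$ or a Dyck inequality for a path that bends at row $l$. My first attempt there would be a case split according to where the path crosses row $l$. In each case I would write the path sum as a telescoping combination of the $v$'s, bounding the part above row $l$ by the $\textbf{t}$-condition and the part below by the old inequalities. If a genuine violation appears, I would modify the definition of $q$ by a greedy ``bumping'' rule in the style of Gelfand--Tsetlin interlacing, taking $q_{l,j}=\min(\cdot,\cdot)$. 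Then I would recheck injectivity using the monotonicity of that rule.
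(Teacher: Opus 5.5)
\textbf{There is a genuine gap: the proposal never defines $\phi_{l,s}$.} The one concrete candidate you write down fails, and it fails at the step you treated as routine.

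\emph{The naive map is not admissible.} Since all rows above $l$ vanish, the new $v_{l,j}$ is just $q_{l,j}=t_j$; no column sums of $\mathbf p$ enter. For $i>l$ the new quantity is $v_{i,j}+t_j-t_{j+1}$. The defining condition of $\tilde{\Sigma}^t_{l,s}$ only gives $t_j\le k_j-\sum_r p_{r,j}+\sum_r p_{r,j+1}$, and this bound exceeds $k_j$ as soon as column $j+1$ of $\sigma$ is heavier than column $j$. Concretely, take $\SL_4$, $l=3$, $s=1$, $\lambda=\omega_2$, let $\sigma$ have $p_{4,2}=1$ as its only nonzero entry, and let $\mathbf t=(1,0)$. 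This pair lies in $\tilde{\Sigma}^t_{3,1}(\omega_2)$. Yet $q_{3,1}=1>k_1=0$, and the naive image is not even in $\Sigma$: the Dyck path through $(2,1),(3,1),(4,1),(4,2),(4,3)$ has sum $2>k_1+k_2+k_3=1$.

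\emph{The remaining steps are not established.} Your fallbacks (``sliding'', or a ``bumping'' rule with an unspecified $\min$) are never defined. So neither membership in $\tilde{\Sigma}_{l-1,s}$ nor injectivity is proved. Two further steps would not work as stated. First, the Dyck paths that survive the remark (those with $r\ge l-1$ and $t\le n+m-s+1$) involve only columns $\ge l-1$ and rows $\le n+m-s+1$. No type 2 or type 3 inequality lives there, so these paths cannot be handled by decomposing them into type 2 and 3 inequalities; they must be checked directly. Second, injectivity cannot come from a ``triangular unimodular change of variables'' once the rule involves $\min$ or data-dependent choices.

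\textbf{The missing idea is how to dispose of the part of $t_j$ that does not fit into row $l$.} The paper first sets $p_{l,j}=\min\bigl(t_j,\min_i(k_j-v_{i,j})\bigr)$. It then spends the excess $\tilde t_j$ not by pushing it down column $j$, but by \emph{left shifts}:
\begin{enumerate}
\item A unit is moved from $(i,j+1)$ to $(i,j)$, where $(i,j)$ is the leader of column $j$ (the lowest row where $v_{\cdot,j}$ is maximal and tight).
\item This triggers a cascade of left shifts in columns $j+2,j+3,\dots$, at strictly higher tight rows.
\item The cascade ends by adding one to an entry of row $l$ when no tight row is available.
\end{enumerate}
In the example above this produces $q_{4,1}=q_{3,2}=1$. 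The unit of $t_1$ turns $p_{4,2}$ into $p_{4,1}$ and reappears in row $3$, one column to the right.

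The proof then needs three verifications:
\begin{enumerate}
\item Nonnegativity of the shifted entries, via a bound on $\tilde t_j$ by the entries below the leader.
\item Preservation of inequalities 2--4. Whenever a left-hand side increases, it was strictly smaller than a tight competitor, for instance a Dyck path $D'$ bending at the leader row.
\item Injectivity. The shift counts $m_{i,j+1}$ are recovered column by column from the right, as $v'_{i,j}+p'_{l,j}-k_j$ or as $v'_{i,j}-v'_{i',j}$. Then $\mathbf t$ is read off from row $l$, corrected by the shift counts.
\end{enumerate}
Without a concrete rule of this kind and these checks, your text is a strategy outline rather than a proof.
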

We postpone the proof to \ref{proof of lemma}. We fix $s$ and argue by induction on $l$. The base case of the induction is given by 
Lemma \ref{lemma 1} and Corollary \ref{corollary 1}. 
 
Obviously, $\Sigma_{l,s}$ is contained in $\Sigma_{l,1}\cap\Sigma_{1,s}$ since every highest weight vector with respect to $\SL_{l}\times\SL_{s}$ is a highest weight vector with respect to both $\SL_{l}\times\{e\}$ and $\{e\}\times\SL_{s}$. By Lemma \ref{lemma 1} and Corollary \ref{corollary 1} one has $\Sigma_{l,s}\subset\tilde{\Sigma}_{l,s}=\Sigma_{l,1}\cap\Sigma_{1,s}$.

The semigroup $\Sigma_{l,s}$ describes the restriction from $\SL_{m+n}$ to $\SL_l\times\SL_s$. The further restriction to $\SL_{l-1}\times\SL_{s}$ is controlled by $\Sigma_{l,s}^t$ or $\Sigma_{l-1,s}$. In particular for every dominant weight $\lambda$ we have $|\Sigma_{l,s}^t(\lambda)|=|\Sigma_{l-1,s}(\lambda)|$. The embedding $\Sigma_{l,s}\subset\tilde{\Sigma}_{l,s}$ implies $\Sigma_{l,s}^t\subset\tilde{\Sigma}_{l,s}^t$ and we conclude that $|\tilde{\Sigma}_{l,s}^t(\lambda)|\geq |\Sigma_{l,s}^t(\lambda)|=|\Sigma_{l-1,s}(\lambda)|=|\tilde{\Sigma}_{l-1,s}(\lambda)|$, where the last equality holds by the induction hypothesis. On the other hand, $|\tilde{\Sigma}_{l,s}^t(\lambda)|\leq |\tilde{\Sigma}_{l-1,s}(\lambda)|$ by Lemma \ref{main lemma} and we conclude that $|\tilde{\Sigma}_{l,s}^t(\lambda)|= |\Sigma_{l-1,s}(\lambda)|$. Therefore, $|\Sigma_{l-1,s}(\lambda)|=|\Sigma_{l,s}^t(\lambda)|\leq |\tilde{\Sigma}_{l,s}^t(\lambda)|= |\Sigma_{l-1,s}(\lambda)|$ and hence $|\Sigma_{l,s}^t(\lambda)|= |\tilde{\Sigma}_{l,s}^t(\lambda)|$. The last equality implies $\tilde{\Sigma}_{l,s}=\Sigma_{l,s}$.
\end{proof}

 \subsection{Proof of Lemma \ref{main lemma}}\label{proof of lemma}
We recall the notations. $\tilde{\Sigma}_{l,s}\subset\Lambda^+\times\mathbb{Z}^N$ is the semigroup of integral solutions of the inequalities and equalities from Theorem \ref{general theorem}. A signature $\sigma\in\tilde{\Sigma}_{l,s}$ has the form $\sigma=(\lambda;\textbf{p})$, where $\lambda$ is the highest weight of $\sigma$ and $\textbf{p}$ is the set of exponents of $\sigma$. Having fixed the highest weight $\lambda$ we can think that $\sigma$ is a matrix having $p_{i,j}$ as the $(i,j)$-th entry. $\tilde{\Sigma}_{l,s}^t(\lambda)\subset\tilde{\Sigma}_{l,s}(\lambda)\times\mathbb{Z}^{l-1}$ is the set $\{(\sigma,\textbf{t})|\sigma\in\tilde{\Sigma}_{l,s}(\lambda), t_i\leq k_i-p_{1,i}-\ldots-p_{n+m,i}+p_{1,i+1}+\ldots+p_{n+m,i+1}, i=1,\ldots,l-1\}.$ 

To avoid ambiguity, in this section $v_{i,j}$ always denotes
 $$p_{i,j}+p_{i-1,j}+\ldots+p_{l+1,j}-p_{i-1,j+1}-p_{i-2,j+1}-\ldots-p_{l+1,j+1}. $$
 
 In particular, the inequalities 2. for $\tilde{\Sigma}_{l-1,s}$ take the form $v_{i,j}+p_{l,j}-p_{l,j+1}\leq k_j$.

\begin{lemma*}
There exists an injective map $\phi_{l,s}: \tilde{\Sigma}_{l,s}^{t}\to\tilde{\Sigma}_{l-1,s}$.
\end{lemma*}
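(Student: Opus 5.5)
The plan is to construct $\phi_{l,s}$ explicitly, by putting the tuple $\textbf{t}$ into the $l$-th row of the matrix. This row is forced to be zero in $\tilde{\Sigma}_{l,s}$, but it is allowed to be nonzero in $\tilde{\Sigma}_{l-1,s}$. A pair $(\sigma,\textbf{t})$ with $\sigma=(\lambda;\textbf{p})$ is sent to a signature $\sigma'=(\lambda;\textbf{p}')$ of the same highest weight. The first guess is
$$p'_{l,j}=t_j\ (1\le j\le l-1),\qquad p'_{i,j}=p_{i,j}\ \text{otherwise}.$$
This guess is natural for two reasons. Adding $t_j$ to the exponent of $\varepsilon_j-\varepsilon_l$ changes the weight by $-t_j\varepsilon_j+t_j\varepsilon_l$. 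Restricted to $\SL_{l-1}\times\SL_s$, this is exactly the shift $\wt(\sigma)-\sum t_j\varepsilon_j$ that appears in the restriction formula coming from Lemma \ref{lemma 1}. So the map respects the parametrization of $\SL_{l-1}\times\SL_s$-components. Moreover, with this choice injectivity is immediate, since $\sigma$ and $\textbf{t}$ can be read off from $\sigma'$ by erasing row $l$.

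Next I would verify the defining conditions of $\tilde{\Sigma}_{l-1,s}$ for $\sigma'$.
\begin{itemize}
\item The vanishing conditions 1 and nonnegativity 0 are clear.
\item Conditions 3 (the $h_{i,j}$ for $i\ge n+m-s+2$) do not involve row $l$ when $l<n+m-s+1$, so they are inherited from $\sigma$.
\item The Dyck path conditions 4 for paths passing through row $l$ should follow, as in Remark \ref{remark 3}, from the type 2 inequalities and the bounds $t_j\le l_j$.
\end{itemize}
The real content is therefore condition 2 for $\tilde{\Sigma}_{l-1,s}$, which in the notation fixed above reads $v_{i,j}+t_j-t_{j+1}\le k_j$ for $j\le l-2$. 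Two facts are available here: the bound $t_j\le k_j-C_j+C_{j+1}$, where $C_j$ is the $j$-th column sum of $\textbf{p}$, and the inequalities $v_{i,j}\le k_j$ for $\sigma$. The trouble is that $v_{i,j}$ is only a \emph{partial} column difference, up to row $i$. It is not bounded by $C_j-C_{j+1}$ in general, so the naive map can violate condition 2 when $t_j$ is large and $t_{j+1}$ is small.

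This is the step I expect to be the main obstacle. If the naive map fails, I would modify it by a weight-preserving, piecewise-linear ``sliding'' correction, in the spirit of Gelfand--Tsetlin interlacing. The idea is to process the columns $j=l-1,l-2,\ldots,1$ in turn. At column $j$, let $\delta_j=\max_i(v_{i,j})+t_j-t_{j+1}-k_j$ be the excess. If $\delta_j>0$, move $\delta_j$ units from the row-$l$ entry of column $j$ to the entry of column $j+1$ in the same row, and compensate by transferring $\delta_j$ units down the columns of $\textbf{p}$ below the maximizing row. The weight is kept fixed because the transfers are arranged along roots with the same total contribution, using $\varepsilon_j-\varepsilon_l=(\varepsilon_j-\varepsilon_{j+1})+(\varepsilon_{j+1}-\varepsilon_l)$.

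To keep injectivity, I would show that each step is invertible. The position of the maximum of $v_{\cdot,j}$ and the amount $\delta_j$ should be recoverable from $\sigma'$ by reading where condition 2 for $\tilde{\Sigma}_{l,s}$ becomes tight. Then I would check, by a case analysis on the row where the maximum is attained, that the corrected matrix satisfies conditions 2--4. As a consistency check, I would first verify the construction on $\SL_4$ with $l=s=2$ (the Example), where everything can be enumerated by hand. The counting argument in the proof of Theorem \ref{general theorem} then only needs injectivity, not surjectivity.
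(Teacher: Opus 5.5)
You correctly see that the naive map $p'_{l,j}=t_j$ can violate condition 2. So the entire content of the lemma lies in the correction, and that part is only announced (``I would modify\ldots'', ``I would show\ldots'', ``I would check\ldots''). No map is actually defined, and nonnegativity, conditions 2--4 and injectivity are not verified.

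The naive map also fails where you expect it to succeed: condition 4 in column $l-1$ does \emph{not} follow from type 2 and $t_j\le l_j$. Take $\sigma$ whose only nonzero exponent is $p_{l+1,l}=1$, with $k_l\ge 1$. Then $l_{l-1}=k_{l-1}+1$, so $t_{l-1}=k_{l-1}+1$ is allowed. The naive image then violates the one-entry Dyck path inequality $p'_{l,l-1}\le k_{l-1}$. Your correction cannot repair this, because for $j=l-1$ ``the entry of column $j+1$ in row $l$'' is the diagonal position $(l,l)$. The paper's algorithm instead puts $k_{l-1}$ at $(l,l-1)$ and absorbs the extra unit by the left shift $p_{l+1,l}\to p_{l+1,l-1}$.

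For $j\le l-2$ the sketched correction does not work as stated either. Moving a unit from $(l,j)$ to $(l,j+1)$ changes the weight by $\varepsilon_j-\varepsilon_{j+1}$. To compensate you must move a unit from column $j+1$ to column $j$ below row $l$; a transfer ``down the columns'' does not do this. Suppose you do it as the paper does, by a left shift $p_{r,j+1}\to p_{r,j}$. Then $v_{i,j}$ rises by $1$ at $i=r$ and by $2$ for $i>r$. So per unit, the left side of condition 2 in column $j$ drops by $2$ above $r$, by $1$ at $r$, and not at all below $r$. Hence $r$ must be the lowest row attaining the maximum, and units must be handled one at a time with $r$ recomputed; shifting $\delta_j$ units at once does not work.

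Worse, the unit added to $p_{l,j+1}$ raises the left side of condition 2 in column $j+1$ by $1$ at every row above $r$. Only rows $\ge r$ are compensated by the decrease of $p_{r,j+1}$. This can break column $j+1$, which your right-to-left pass has already fixed.

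The missing idea is the paper's cascade.
\begin{itemize}
\item First put $p_{l,j}=\min(t_j,k_j-\max_i v_{i,j})$.
\item Each leftover unit of $\tilde t_j$ is absorbed by left-shifting $(i,j+1)$, where $(i,j)$ is the \emph{leader}: the lowest tight row attaining $\max_{i'} v_{i',j}$.
\item Then look in column $j+1$ for the lowest tight row $s_1<i$ attaining the maximum over rows above $i$, and left-shift $(s_1,j+2)$. Then find $s_2<s_1$ in column $j+2$, and so on.
\item The unit is deposited in some $p_{l,j'}$ only when no such row exists, and the cascade stops on reaching column $l$.
\end{itemize}
The strictly decreasing rows $i>s_1>s_2>\cdots$ are what make nonnegativity, conditions 2--4 and injectivity provable. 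Injectivity then comes from recovering the shift counts $m_{i,j}$ column by column from right to left, as $v'_{i,j}+p'_{l,j}-k_j$ or $v'_{i,j}-v'_{i',j}$ in $\sigma'_j$.
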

In the subsection \ref{construction} for $(\sigma,\textbf{t})\in\tilde{\Sigma}_{l,s}^t(\lambda)$ we describe an algorithm constructing a matrix $\phi_{l,s}(\sigma,\textbf{t})$. Then in the subsection \ref{correctness} we prove that the algorithm is correct in the sense that there are no negative entries in the constructed matrix. In the subsection \ref{image} we prove that $\phi_{l,s}(\sigma,\textbf{t})$ belongs to $\tilde{\Sigma}_{l-1,s}$. Finally, the subsection \ref{injectivity} is devoted to the proof of injectivity of $\phi_{l,s}$.
 
\subsubsection{Construction of $\phi_{l,s}$.}\label{construction} Starting from $(\sigma,\textbf{t})$ we describe the process of successive transformations of the matrix $\sigma=\sigma^{(0)}\leadsto\sigma'\leadsto\sigma''\leadsto\ldots\leadsto\sigma^{(M)}.$ The result $\sigma^{(M)}$ is the definition of $\phi_{l,s}(\sigma,\textbf{t})$. We start at row $l+1$ and move from the first element to the $(l-1)$-th, then proceed to the $l+2$-th row and again move from the first element to the $(l-1)$-th, continuing this process and ending at the $(l-1)$-th element of the $(n+m)$-th row. At each entry we somehow transform our matrix (see the algorithm below). If we are now at the $(i,j)$-th entry then we say that we \emph{terminate} the algorithm if we are done with the $(i,j)$-th entry and move to the next entry.

\emph{The left shift} of the $(i,j)$-th entry in $\sigma^{(k)}$ is the transformation $\sigma^{(k)}\leadsto\sigma^{(k+1)}$, where $p^{(k+1)}_{i,j-1}=p^{(k)}_{i,j-1}+1$ and $p^{(k+1)}_{i,j}=p^{(k)}_{i,j}-1.$ We denote by $\sigma_j^{(k)}$ the matrix obtained from $\sigma$ by the same transformations as $\sigma^{(k)}$ but ignoring the left shifts of the entries in $j+2$-th column. Recall that for $\sigma\in\tilde{\Sigma}_{l,s}$ we have $p_{l,\ldots}=0$. First of all we set $p^{(1)}_{l,i}=\min(t_i,\min_{j}(k_i-v^{(0)}_{j,i}))$. So we have $p^{(1)}_{l,i}=t_i$ or there exists $j$ such that $p^{(1)}_{l,i}+v^{(1)}_{j,i}=k_i$. Denote $\tilde{t}^{(1)}_i=t_i-p^{(1)}_{l,i}\geq 0$.

We say that the $(i,j)$-th entry is \emph{the leader} of $j$-th column of $\sigma^{(k)}$ if $v^{(k)}_{i,j}+p_{l,j}^{(k)}\geq k_j$ in $\sigma^{(k)}_j$ and $i$ is the maximal number satisfying $v_{i,j}=\max_{i'}v_{i',j}$.

Now we describe how we transform the current $\sigma^{(k)}$ if we are at the $(i,j)$-th entry.

If the $(i,j)$-th entry is not the leader of $j$-th column of $\sigma^{(k)}$ or $\tilde{t}^{(k)}_j=0$ then we terminate the algorithm. Otherwise, if the $(i,j)$-th entry is the leader of $j$-th column of $\sigma^{(k)}$ and $\tilde{t}^{(k)}_j>0$ then we transform $\sigma^{(k)}\leadsto\sigma^{(k+1)}\leadsto\ldots$ in the following way:
\begin{enumerate}
 \item[1.] $\tilde{t}^{(k+1)}_{j}=\tilde{t}^{(k)}_{j}-1$ and we do the left shift of the $(i,j+1)$-th entry,
 \item[2.] if $j+1=l$ then we terminate the algorithm. Otherwise we find an entry, say $(s_1,j+1)$, in the $j+1$-th column such that 
 \item[(2.a)] $v^{(k+1)}_{s_1,j+1}+p_{l,j+1}\geq k_{j+1}$ in $\sigma^{(k+1)}_{j+1}$ and $s_1<i$ 
 \item[(2.b)] $s_1$ is the maximal number satisfying $$v_{s_1,j+1}=\max_{i'<i}\{v_{i',j+1}\}$$
\\ and we do the left shift of the $(s_1,j+2)$-th entry.
If there is no such $s_1$ then $p^{(k+2)}_{l,j+1}=p^{(k+1)}_{l,j+1}+1$ and terminate the algorithm.
 \item[3.] if $j+2=l$ then we terminate the algorithm. Otherwise we find an entry, say $(s_2,j+2)$, in the $j+2$-th column such that 
 \item[(3.a)] $v^{(k+2)}_{s_2,j+2}+p_{l,j+2}\geq k_{j+2}$ in $\sigma^{(k+2)}_{j+2}$ and $s_2<s_1$
 \item[(3.b)] $s_2$ is the maximal number satisfying $$v_{s_2,j+2}=\max_{i'<s_1}\{v_{i',j+2}\}$$
 \\and we do the left shift of the $(s_2,j+3)$-th entry.
If there is no such $s_2$ then $p^{(k+3)}_{l,j+2}=p^{(k+2)}_{l,j+2}+1$ and terminate the algorithm.
\item[4.] And so on\ldots.
\end{enumerate} 
\begin{remark}
\emph{It is not hard to see that when the algorithm completely terminates we have $\tilde{t}_i=0, i=1,\ldots,l-1.$}
\end{remark}

\subsubsection{Correctness of $\phi_{l,s}$.}\label{correctness}
In this subsection we prove that the algorithm in the previous subsection is correct in the sense that if we do the left shift of the $(i,j)$-th entry then $p_{i,j}>0$.

Now we deal with the left shifts in 1.

Notice that if there is no the leader in the $j$-th column in $\sigma^{(1)}$ then $\tilde{t}^{(1)}_j=0$ and we do not do the left shifts of elements in $j+1$-th column. Otherwise, let the $(i,j)$-th entry be the leader of $j$-th column in $\sigma^{(1)}$. Then we have the inequality
 $$\tilde{t}^{(1)}_j\leq (p^{(1)}_{i,j+1}-p^{(1)}_{i+1,j})+(p^{(1)}_{i+1,j+1}-p^{(1)}_{i+2,j})+\ldots+p^{(1)}_{n+m,j+1}.$$
  Following the algorithm it is not hard to see that for the new leader of $\sigma^{(k)}$ for some $k$, say $(i',j)$, this inequality in $\sigma^{(k)}_{j}$ remains true, i.e. $\tilde{t}^{(k)}_j\leq (p^{(k)}_{i',j+1}-p^{(k)}_{i'+1,j})+(p^{(k)}_{i'+1,j+1}-p^{(k)}_{i'+2,j})+\ldots+p^{(k)}_{n+m,j+1}.$ In particular, if the $(n+m,j)$-th entry is the leader of $j$-th column in $\sigma^{(k)}$ for some $k$ then $\tilde{t}^{(k)}_j\leq p^{(k)}_{n+m,j+1}$ in $\sigma_{j}^{(k)}$ and hence we can do the left shift of $(n+m,j+1)$-th entry if $\tilde{t}_j^{(k)}>0$. If the $(i,j)$-th entry of $\sigma^{(k)}$ for some $k$ is the leader and $i<n+m$ then, obviously, $p^{(k)}_{i,j+1}>p^{(k)}_{i+1,j}\geq 0$ and we can do the left shift. 

Now we deal with the left shifts in 2, 3, and so on. 

Suppose that $p^{(k)}_{s_1,j+2}=0$ in $\sigma^{(k)}_{j+1}$ and we want to do the left shift of the $(s_1,j+2)$-th entry. Then if $s_1\neq i-1$ we have in $\sigma^{(k)}_{j+1}$
$$v^{(k)}_{s_1+1,j+1}+p^{(k)}_{l,j+1}=v^{(k)}_{s_1,j+1}-p^{(k)}_{s_1,j+2}+p^{(k)}_{s_1+1,j+1}+p^{(k)}_{l,j+1}\geq v^{(k)}_{s_1,j+1}+p^{(k)}_{l,j+1}.$$ 
Therefore the $(s_1,j+1)$-th entry does not satisfy 2.b. Otherwise, if $s_1=i-1$  then the $(i-1,j+1)$-th entry can not satisfy 2.a and 2.b (or 3.a, 3.b or 4.a, 4.b and so on). Indeed, before the left shift of the $(i,j+1)$-th entry one has $v^{(k)}_{i,j+1}+p^{(k)}_{l,j+1}>k_{j+1}$ in $\sigma^{(k)}_{j+1}$ and by analyzing possible left shifts of elements in $j+2$-th column we see that there should have occurred the left shifts of some $(\ldots,j+2)$-th entry above $i$-th row and there exists $i'$ such that $v^{(k)}_{i',j+1}\geq v^{(k)}_{i,j+1}$ in $\sigma^{(k)}_{j+1}$ and $i'<i$ (even $i'<i-1$ since $p^{(k)}_{i-1,j+2}=0$). So, since $p^{(k)}_{i-1,j+2}=0$ and $p^{(k)}_{i,j+1}\neq 0$ we have $v^{(k)}_{i-1,j+1}<v^{(k)}_{i,j+1}$ in $\sigma^{(k)}_{j+1}$. Combining this with $v^{(k)}_{i',j+1}\geq v^{(k)}_{i,j+1}$ we obtain $v^{(k)}_{i',j+1}> v^{(k)}_{i-1,j+1}$ and conclude that the $(i-1,j+1)$-th entry does not satisfy 2.a, 2.b (or 3.a, 3.b $\ldots$).

\subsubsection{Image of $\phi_{l,s}$}\label{image}
In this subsection we prove that $\phi_{l,s}(\sigma,\textbf{t})$, where $(\sigma,\textbf{t})\in\tilde{\Sigma}_{l,s}^t$  belongs to $\tilde{\Sigma}_{l-1,s}.$

We show that the inequalities 2. from Theorem \ref{general theorem} are satisfied. These inequalities have the form  $v_{i,j}+p_{l,j}-p_{l,j+1}\leq k_j, j=1,\ldots,l-1$.
Under the left shifts the left-hand side either remains unchanged or is reduced by 1 or is increased by 1. The latter happens when the left shift of the $(k,j+1)$-th entry takes place for some $k<i$, provided that no entries $(m,j)$ with $m\leq i$ have been shifted. Then according to the algorithm (see the definition of the leader and 2.a, 2.b) we have $v^{(k)}_{k,j}>v^{(k)}_{i,j}$ in $\sigma^{(k)}_j$ (and hence in $\sigma^{(k)}$). Therefore if the inequality $v^{(k)}_{k,j}+p^{(k)}_{l,j}-p^{(k)}_{l,j+1}\leq k_j$ is satisfied before the left shift of the $(k,j+1)$-th entry then $v^{(k)}_{i,j}+p^{(k)}_{l,j}-p^{(k)}_{l,j+1}<v^{(k)}_{k,j}+p^{(k)}_{l,j}-p^{(k)}_{l,j+1}\leq k_j$. This implies that raising the left-hand side by 1 preserves the inequality.

 We show that the inequalities 3. from Theorem \ref{general theorem} are satisfied. These inequalities have the form  $h_{i,j}\leq k_{i-1}, n+m-s+2\leq i\leq n+m$.
Under the left shifts the left-hand side either remains unchanged or is reduced by 1 or is increased by 1. The latter happens when the left shift of the $(i-1,j+1)$-th entry takes place, provided that the $(i,j)$-th entry has not been shifted. This implies $p^{(k)}_{i-1,j+1}>p^{(k)}_{i,j}$ in $\sigma^{(k)}_j$ (and hence in $\sigma^{(k)}$ as well). Indeed, otherwise $v^{(k)}_{i,j}\geq v^{(k)}_{i-1,j}$ and hence we do not do the left shift of the $(i-1,j+1)$-th entry. We have $h^{(k)}_{i,j}=p^{(k)}_{i,j}-p^{(k)}_{i-1,j+1}+h^{(k)}_{i,j+1}<h^{(k)}_{i,j+1}\leq k_{i-1}$. This implies that raising the left-hand side by 1 preserves the inequality.

We show that the inequalities 4. from Theorem \ref{general theorem} are satisfied. For a Dyck path $D$ denote $s_{D}=\sum_{(i,j)\in D}p_{i,j}$. 

Let $D$ be a Dyck path that starts at the $l$-th row. The left-hand side of the inequality $s_{D}\leq k_1+\ldots$ is increased by 1 only if we do the left shift of some $(i,n)$-th entry (where $(i,n)\notin D$ and $(i,n-1)\in D$) and there are no the left shifts of the $(j,n-1)$-th entries for $j\leq p$, where $p$ is the maximal number satisfying $(p,n-1)\in D$. We have $v^{(k)}_{i,n-1}>v^{(k)}_{p,n-1}$ in $\sigma^{(k)}_j$ (and hence in $\sigma^{(k)}$). Let $\tilde{D}$ is the Dyck path that starts at $(l+1,n)$-th entry and then goes downward to the $(p,n)$-th entry coinciding with $D$ after. If $(i_1,j_1)$, $(i_2,j_2)$, $\ldots$, $(p,n-1)$, $\dots$ are the entries where $D$ bends then (assuming $v_{l,j}=p_{l,j}$, and if $i_1=l$ then $v_{i_1,j_1}=0$) we have $s^{(k)}_D=p^{(k)}_{l,j_1}+v^{(k)}_{i_1,j_1}+v^{(k)}_{i_1,j_1+1}+\ldots+v^{(k)}_{i_1,j_2-1}+v^{(k)}_{i_2,j_2}+v^{(k)}_{i_2,j_2+1}+\dots+v^{(k)}_{p,n-1}+s^{(k)}_{\tilde{D}}$. Then we consider another Dyck path $D'$ that coincide with $D$ but bends at the $(i,n-1)$-th entry (not at the $(p,n-1)$-th entry as $D$) and then goes downward to $(p,n)$-th entry coinciding with the Dyck path $D$ after. One has
 $$s^{(k)}_{D'}=p^{(k)}_{l,j_1}+v^{(k)}_{i_1,j_1}+v^{(k)}_{i_1,j_1+1}+\ldots+v^{(k)}_{i_1,j_2-1}+v^{(k)}_{i_2,j_2}+v^{(k)}_{i_2,j_2+1}+\dots+v^{(k)}_{i,n-1}+s^{(k)}_{\tilde{D}}>$$
$$ >s^{(k)}_{D}=p^{(k)}_{l,j_1}+v^{(k)}_{i_1,j_1}+v^{(k)}_{i_1,j_1+1}+\ldots+v^{(k)}_{i_1,j_2-1}+v^{(k)}_{i_2,j_2}+v^{(k)}_{i_2,j_2+1}+\dots+v^{(k)}_{p,n-1}+s^{(k)}_{\tilde{D}}.
 $$ 
 This implies that raising the left-hand side of $s^{(k)}_{D}<s^{(k)}_{D'}\leq k_1+\ldots$ by 1 preserves the inequality.

\subsubsection{Injectivity of $\phi_{l,s}$.}\label{injectivity}
We show that $\sigma'=\phi_{l,s}(\sigma,\textbf{t})$ uniquely determines $(\sigma,\textbf{t})$. Let $m_{i,j}$ be the number of the left shifts of the $(i,j)$-th entry. The columns of the matrix $\sigma$ are recovered right-to-left by scanning columns of $\sigma'$ right-to-left starting from $l-1$-th, and rows top-to-bottom within each column. This implies that we know $\sigma'_j$ when dealing with the $j$-th column. For the $(i,j)$-th entry if $p'_{l,j}+v'_{i,j}\leq k_j$ in $\sigma'_j$ then $m_{i,j+1}=0$ according to the algorithm. If there were no the left shifts of the $(i',j+1)$-th entries with $i'<i$ then according to the algorithm every left shift of the $(i,j+1)$-th entry increases $v_{i,j}+p_{l,j}$ by 1 in $\sigma_j$ starting from $k_j$. Therefore $m_{i,j+1}=v'_{i,j}+p'_{l,j}-k_j$ in $\sigma'_j$. Otherwise, if $(i',j+1)$ is the lowest entry in $j+1$-th column that was shifted satisfying $i'<i$, then each the left shift of the $(i,j+1)$-th entry increases the difference $(v_{i,j}+p_{l,j})-(v_{i',j}+p_{l,j})$ in $\sigma_j$ by 1, starting from 0. Therefore $m_{i,j+1}=v'_{i,j}-v'_{i',j}$ in $\sigma'_j$. This allows to restore $\sigma$ by $\sigma'.$ Indeed, one has $p_{i,j}=p'_{i,j}-m_{i,j+1}+m_{i,j}$.

It remains to restore $\textbf{t}=(t_1,\ldots,t_{l-1})$. Denote $m_i=m_{l+1,j}+m_{l+2,j}+\ldots+m_{m+n,j}$. Then according to the algorithm $p_{l,i}+m_{i+1}-m_{i}$ is increased by 1 exactly when $\tilde{t}_{i}$ is decreased by 1. After the algorithm terminates one has $\tilde{t}_{i}=0$ hence $t_i=p'_{l,i}+m_{i+1}-m_i$.

\end{document}